\newtheorem{theorem}{Theorem}[section]
\newtheorem{lemma}[theorem]{Lemma}
\newtheorem{corollary}[theorem]{Corollary}
\newtheorem{remark}[theorem]{Remark}
\newtheorem{proposition}[theorem]{Proposition}
\numberwithin{equation}{section}
\newcommand{\CC}{C_k}
\newcommand{\NN}{\mathbb{N}}
\newcommand{\w}{\omega}
\newcommand{\Pp}{\mathfrak{P}}
\newcommand{\KK}{\mathcal{K}}
\newcommand{\Nn}{\mathcal{N}}
\newcommand{\AAA}{\mathcal A}
\newcommand{\IR}{\mathbb{R}}
\newcommand{\II}{\mathbb{I}}
\renewcommand{\phi}{\varphi}
\newcommand{\U}{\mathcal U}
\newcommand{\supp}{\mathrm{supp}}
\title[Topological properties of function spaces $\CC(X,2)$]{Topological properties of function spaces $\CC(X,2)$ over zero-dimensional metric spaces $X$}
\author{S.~Gabriyelyan}
\address{Department of Mathematics, Ben-Gurion University of the
Negev, Beer-Sheva, P.O. 653, Israel}
\email{saak@math.bgu.ac.il}
\begin{document}

\begin{abstract}
Let $X$ be a zero-dimensional metric space and $X'$ its derived set. We prove the following assertions: (1) the space $\CC(X,2)$ is an Ascoli space iff $\CC(X,2)$ is $k_\IR$-space   iff either $X$ is locally compact or $X$ is not locally compact but $X'$ is compact, (2) $\CC(X,2)$ is a $k$-space iff either $X$ is a topological sum of a Polish locally compact space and a discrete space or $X$ is not locally compact but $X'$ is compact, (3) $\CC(X,2)$ is a sequential space  iff $X$ is a Polish space and either $X$ is locally compact or $X$ is  not locally compact but $X'$ is compact, (4) $\CC(X,2)$ is a Fr\'{e}chet--Urysohn space  iff $\CC(X,2)$ is a Polish space iff $X$ is a Polish  locally compact space, (5) $\CC(X,2)$ is normal iff  $X'$  is separable, (6)  $\CC(X,2)$ has countable tightness iff $X$ is separable. In cases (1)-(3) we obtain also a topological and algebraical structure of $\CC(X,2)$.
\end{abstract}

\maketitle

\section{Introduction}

Topological properties of function spaces  are of great importance  and have been intensively studied from many years (see \cite{Arhangel,kak,mcoy} and references therein). Various topological properties generalizing metrizability are intensively studied by topologists and analysts. Let us mention the Fr\'{e}chet--Urysohn property, sequentiality, $k$-space property,  $k_\IR$-space property and  Ascoli property (all relevant definitions are given in Section \ref{seq:Ascoli-C(X,2)-Pre}). It is known that
\[
\xymatrix{
\mbox{metric} \ar@{=>}[r] & {\mbox{Fr\'{e}chet--}\atop\mbox{Urysohn}} \ar@{=>}[r] & \mbox{sequential} \ar@{=>}[r] &  \mbox{$k$-space} \ar@{=>}[r] &  \mbox{$k_\IR$-space} \ar@{=>}[r] &  {\mbox{Ascoli}\atop\mbox{space}} },
\]
and none of these implications is reversible.

For topological spaces $X$ and $Y$, we denote by $\CC(X,Y)$ the space $C(X,Y)$ of all continuous  functions from $X$ into $Y$ endowed with the compact-open topology. The space $\CC(X,\IR)$ of all real-valued functions on $X$ is denoted by $\CC(X)$. If $G$ is a topological group, then so is $\CC(X,G)$ under the pointwise operation.

For a metrizable space $X$ and $Y=\IR$ or $Y=\II :=[0,1]$, the spaces $\CC(X)$ and $\CC(X,\II)$ are typically not a $k$-space. R.~Pol proved the following remarkable result:
\begin{theorem}[\cite{Pol-1974}] \label{t:Pol-k-space}
Let $X$ be a first countable paracompact space. Then the space $\CC(X,\II)$ is a $k$-space  if and  only if $X=L\cup D$ is the topological sum of a locally compact Lindel\"{o}f space $L$ and a discrete space $D$.
\end{theorem}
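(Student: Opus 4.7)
The plan is to prove the two implications separately, with the easy direction essentially structural and the hard direction requiring a concrete obstruction argument.

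For the sufficiency, if $X = L \sqcup D$ is the topological sum of a locally compact Lindel\"{o}f space $L$ and a discrete space $D$, then the obvious restriction map is a topological group isomorphism
\[
\CC(X,\II) \;\cong\; \CC(L,\II) \times \II^{D}.
\]
A locally compact Lindel\"{o}f space is $\sigma$-compact and hence hemicompact, so $\CC(L,\II)$ is metrizable; the other factor $\II^{D}$ is compact; and the product of a $k$-space with a locally compact space is a $k$-space. This settles the implication.

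For the necessity, assume $\CC(X,\II)$ is a $k$-space, let $L$ denote the open set of points at which $X$ is locally compact, and set $D := X\setminus L$. I would show: (a) $D$ is closed and discrete as a subspace of $X$, (b) no point of $D$ is an accumulation point of $L$ (so $D$ is open and one has the decomposition $X = L \sqcup D$), and (c) $L$ is Lindel\"{o}f. Item (c) I would handle by a Nachbin/Warner-type argument: an uncountable closed discrete subset of $L$ in a first countable paracompact ambient space permits the construction of a family of continuous bump functions whose pointwise supremum fails to be bounded on compacta, producing a subset of $\CC(X,\II)$ witnessing failure of the $k$-property.

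The heart of the proof is (a)--(b), which I would carry out by contradiction. Suppose there exist $x_\infty \in D$ and a non-trivial sequence $(x_n)$ in $X\setminus\{x_\infty\}$ converging to $x_\infty$, with each $x_n$ either in $D$ or witnessing that $x_\infty$ lies in $\overline{L}\cap D$. Using non-local compactness around each $x_n$ (when $x_n \in D$) together with first countability, I would extract a doubly indexed array $y_{n,k} \to x_n$ as $k\to\infty$ whose range escapes every compact subset of $X$. Paracompactness then yields pairwise disjoint open neighborhoods $U_{n,k} \ni y_{n,k}$, all missing $x_\infty$, supporting continuous bumps $f_{n,k}\colon X\to \II$ with $f_{n,k}(y_{n,k}) = 1$ and $\supp(f_{n,k})\subseteq U_{n,k}$. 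The set $A := \{f_{n,k}\}$ has the zero function in its $\CC(X,\II)$-closure, because the $y_{n,k}$ escape compacta in $X$, so every basic compact-open neighborhood of $0$ omits only finitely many $f_{n,k}$. On the other hand, an Ascoli-type equicontinuity argument at $x_\infty$ shows that any compact $K\subset \CC(X,\II)$ meets $A$ in only finitely many elements, so $A\cap K$ is closed for every compact $K$. Together these facts contradict the assumption that $\CC(X,\II)$ is a $k$-space. The main obstacle will be organizing the diagonal construction so that the $f_{n,k}$ are simultaneously globally continuous, uniformly localized about the $y_{n,k}$, and sufficiently ``spread out'' to force the equicontinuity contradiction; the first countability and paracompactness of $X$ are used precisely at this point.
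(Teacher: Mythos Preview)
The paper does not prove this theorem; it is quoted from Pol's 1974 paper \cite{Pol-1974} and used as a black box, so there is no proof in the present paper against which to compare your proposal.

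That said, your proposed decomposition for the necessity direction has a structural flaw. You take $L$ to be the set of points at which $X$ is locally compact and $D = X \setminus L$, then aim to show that $D$ is clopen and discrete. But if $D$ is clopen in $X$ and discrete as a subspace, every point of $D$ is isolated in $X$ and is therefore a point of local compactness, forcing $D \subseteq L$ and hence $D = \emptyset$. Thus your steps (a)--(b), if they succeed, actually establish that $X$ is locally compact---which is correct and is indeed the real content---but then your step (c) would require $L = X$ itself to be Lindel\"{o}f, and this is false in general: for an uncountable discrete $X$ one has $\CC(X,\II) = \II^X$, a compact $k$-space, while $X$ is not Lindel\"{o}f. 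The decomposition in the theorem is not ``locally compact part versus non-locally-compact part''. Rather, once local compactness of $X$ is secured, paracompactness splits $X$ into a disjoint family of clopen $\sigma$-compact pieces, and one must show that all but countably many of these pieces are discrete; the non-discrete pieces then assemble into $L$ and the discrete ones into $D$. The obstruction when uncountably many pieces are non-discrete is that $\CC(X,\II)$ then contains a closed copy of $\NN^{\w_1}$, which is not a $k$-space---exactly the mechanism used in the paper's proof of Theorem~\ref{t:Ascoli-C(X,2)-k-space} for the $2$-valued analogue. Your bump-function idea for (c) cannot work as written precisely because in a discrete piece the bumps are characteristic functions of singletons and produce no obstruction.
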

It is well known (see \cite{mcoy}) that $\CC(X)$ is metrizable if and only if $X$ is hemicompact, and $\CC(X)$ is completely metrizable if and only if $X$ is a hemicompact $k$-space. Taking into account that $\CC(X,\II)$ is a closed subspace of $\CC(X)$ and the fact that the space $\IR^{\w_1}$ is not a $k$-space by \cite[Problem 7.J(b)]{Kelley}, Theorem \ref{t:Pol-k-space} implies
\begin{corollary} \label{c:Pol-k-space}
For a metric space $X$, the space $\CC(X)$ is a $k$-space  if and  only if $\CC(X)$ is a Polish space if and only if $X$ is a Polish locally compact space.
\end{corollary}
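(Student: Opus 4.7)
The plan is to verify the two equivalences by establishing the cycle
\[
X \text{ is Polish locally compact} \;\Rightarrow\; \CC(X) \text{ is Polish} \;\Rightarrow\; \CC(X) \text{ is a } k\text{-space} \;\Rightarrow\; X \text{ is Polish locally compact}.
\]

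For the first implication, I would argue as follows. A Polish locally compact $X$ is $\sigma$-compact, and an exhausting sequence of relatively compact open sets shows that $X$ is hemicompact and a $k$-space. By the cited result of McCoy, $\CC(X)$ is then completely metrizable. To upgrade this to Polish, I would fix a fundamental sequence $(K_n)$ of compact subsets of $X$ and use the restriction embedding $\CC(X) \hookrightarrow \prod_n C(K_n)$; each factor $C(K_n)$ is separable because $K_n$ is compact metrizable, so the product is Polish and $\CC(X)$ is a closed Polish subspace. The second implication is immediate: any metrizable space is a $k$-space.

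The decisive step is the third implication, and this is where Theorem \ref{t:Pol-k-space} does most of the work. Being metric, $X$ is first countable and paracompact, and $\CC(X,\II)$ is a closed subspace of $\CC(X)$ (the set of functions with values in $\II$ is closed even for pointwise convergence, hence for the finer compact-open topology). As a closed subspace of a $k$-space, $\CC(X,\II)$ is itself a $k$-space, so Pol's theorem yields a decomposition $X = L \sqcup D$ with $L$ locally compact Lindel\"of and $D$ discrete.

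The main obstacle is ruling out the possibility that $D$ is uncountable. My plan is to exploit the fact that $L$ and $D$ are clopen: restriction gives a topological group isomorphism $\CC(X) \cong \CC(L) \times \IR^D$, so $\IR^D$ is a retract, and in particular a closed subspace, of $\CC(X)$. Hence $\IR^D$ inherits the $k$-space property. If $|D| \geq \w_1$, then $\IR^D$ contains $\IR^{\w_1}$ as a closed subspace, contradicting the fact from \cite[Problem 7.J(b)]{Kelley} that $\IR^{\w_1}$ is not a $k$-space. Therefore $D$ is countable, and $X = L \sqcup D$ is a separable, metric, locally compact space. To close the loop, I would invoke the standard fact that any such space is Polish, since it embeds as an open (hence $G_\delta$) subset of its compact metrizable one-point compactification.
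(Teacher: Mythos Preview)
Your proof is correct and follows essentially the same approach as the paper: the paper's justification of Corollary~\ref{c:Pol-k-space} is the single sentence preceding it, invoking exactly the three ingredients you use (that $\CC(X,\II)$ is closed in $\CC(X)$, Pol's Theorem~\ref{t:Pol-k-space}, and the fact from \cite[Problem~7.J(b)]{Kelley} that $\IR^{\w_1}$ is not a $k$-space). Your argument is simply a careful expansion of that sketch, including the explicit elimination of an uncountable discrete summand via $\IR^D$ and the verification that a separable locally compact metric space is Polish.
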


Metric spaces $X$ for which $\CC(X)$ and $\CC(X,\II)$  are  $k_\IR$-spaces or  Ascoli spaces were completely characterized in \cite{GKP}: $\CC(X)$ is an Ascoli space  iff $\CC(X,\II)$ is an Ascoli space  iff $\CC(X)$ is a $k_\IR$-space iff $\CC(X,\II)$ is a $k_\IR$-space iff $X$ is locally compact.

If $Y=2=\{ 0,1\}$ is the doubleton, the situation changes. Recall that the derived set $X'$ of a topological space $X$ is the set of all non-isolated points of $X$. G.~Gruenhage et al. proved the following result.
\begin{theorem}[\cite{GTZ}] \label{t:GTZ-sequen}
Let $X$ be a zero-dimensional Polish space. Then $\CC(X,2)$ is sequential if and only if $X$ is either locally compact or the derived set $X'$ is compact.
\end{theorem}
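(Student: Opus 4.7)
The plan is to treat the two directions separately.

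\textit{Sufficiency.} If $X$ is locally compact, then by Corollary \ref{c:Pol-k-space}, $\CC(X)$ is Polish, so its closed subspace $\CC(X,2)$ is Polish, hence sequential. If instead $X'$ is compact (but $X$ is not locally compact), set $D:=X\setminus X'$, a countable discrete set. Since $X'$ is compact, the compact-open topology on $\CC(X',2)$ is discrete (two distinct $\{0,1\}$-valued continuous maps on $X'$ have sup distance $1$), so the restriction map $r\colon\CC(X,2)\to\CC(X',2)$ is continuous with discrete codomain, and $\CC(X,2)$ decomposes as the topological sum of its clopen fibres $r^{-1}(g)$. A topological sum is sequential iff each summand is, so it suffices to show each fibre is sequential. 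Fixing a reference extension $W_0$, the fibre over $g$ is identified via symmetric difference with the family $\FF$ of clopen subsets of $X$ disjoint from $X'$ (equivalently, the closed discrete subsets of $X$ contained in $D$). By compactness of $X'$ and zero-dimensionality, $F\in\FF$ iff $F\subseteq X\setminus V$ for some clopen $V\supseteq X'$; choosing a decreasing clopen sequence $V_n\downarrow X'$, we get $\FF=\bigcup_n 2^{X\setminus V_n}$ with each summand compact Polish. I would verify that the compact-open topology on $\FF$ coincides with the $k_\omega$-topology from this exhaustion, making $\FF$ (and hence each fibre) sequential.

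\textit{Necessity.} Assume $X$ is not locally compact and $X'$ is not compact. Choose $x_0\in X'$ with no compact neighbourhood and an infinite closed discrete set $\{y_k\}_{k\in\NN}\subseteq X'$ (both exist: isolated points have the compact neighbourhood $\{x\}$, and non-compact Polish spaces contain infinite closed discrete subsets). Using zero-dimensionality, pick pairwise disjoint clopen neighbourhoods $U_k\ni y_k$, and inside each, exploit the non-local-compactness at $x_0$ (transferred into the $U_k$'s via clopen copies of non-compact neighbourhoods) to construct clopen sets $A_{k,m}\subseteq X$ satisfying: (a) for each $k$, $A_{k,m}$ converges in $\CC(X,2)$ as $m\to\infty$; (b) no diagonal subsequence $A_{k,m_k}$ with $k\to\infty$ converges in $\CC(X,2)$, because the $y_k$'s form a closed discrete set and the $U_k$'s are disjoint; (c) the topological closure of $\{A_{k,m}:k,m\in\NN\}$ strictly exceeds its sequential closure. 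This exhibits a sequentially closed non-closed subset of $\CC(X,2)$, contradicting sequentiality.

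The main obstacle I expect is in the sufficiency case $X'$ compact: the coincidence of the compact-open topology on the fibre $\FF$ with the $k_\omega$-topology from the exhaustion $\{2^{X\setminus V_n}\}$ is not obvious, because compact subsets of $X$ (when $X$ is not locally compact) can intersect $D$ in infinite sets accumulating to $X'$, so compact-open opens and inductive-limit opens must be compared carefully. The necessity direction is in the familiar ``Arens fan'' spirit, but still requires careful combinatorial design of the $A_{k,m}$ so that every convergent sequence from $\{A_{k,m}\}$ is concentrated in a single row while some point in the topological closure is genuinely missed by sequential closure.
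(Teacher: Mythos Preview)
The paper does not prove this theorem; it is quoted from \cite{GTZ}. However, the paper does prove the generalization Theorem~\ref{t:Ascoli-C(X,2)-A}(iii) (via Theorems~\ref{t:Ascoli-C(X,2)-Ascoli}--\ref{t:Ascoli-C(X,2)-seq}) by methods the author explicitly says differ from those of \cite{GTZ}, so that is the natural comparison.

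\textbf{Sufficiency.} Your approach and the paper's are essentially the same idea in different clothing. You decompose $\CC(X,2)$ over the discrete base $\CC(X',2)$ and identify each fibre with $\FF=\bigcup_n 2^{X\setminus V_n}$ for a decreasing clopen base $\{V_n\}$ of $X'$; the paper collapses $X'$ to the point $\infty$ via a map $T\colon X\to M_{\pmb\kappa}$ and identifies the open subgroup $H=\{f:f|_{X'}=0\}$ with $\CC^0(M_{\pmb\kappa},2)\cong\bigoplus_n 2^{\kappa_n}$ in the box topology (Proposition~\ref{p:Ascoli-M-k} and the proof of (3b) in Theorem~\ref{t:Ascoli-C(X,2)-Ascoli}). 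The verification you flag as the obstacle---that the compact-open topology on the fibre agrees with the $k_\omega$-topology from the exhaustion---is exactly what the paper handles by describing all compact subsets of $M_{\pmb\kappa}$ explicitly and checking that $T^\ast$ is open. So your plan is sound here, and the missing step is routine once you note that every compact $K\subseteq X$ satisfies $K\setminus V_n$ finite for each $n$.

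\textbf{Necessity.} Here your outline has a genuine gap, and the paper's route is quite different. You choose a single point $x_0$ of non-local-compactness and a closed discrete $\{y_k\}\subseteq X'$, then speak of the non-local-compactness at $x_0$ being ``transferred into the $U_k$'s via clopen copies of non-compact neighbourhoods.'' But nothing guarantees any $U_k$ is non-locally-compact: the $y_k$ may all sit in the locally compact part of $X$, with the failure of local compactness concentrated entirely at $x_0$, which need not lie in any $U_k$. Without that, your rows $\{A_{k,m}\}_m$ cannot simultaneously converge in $\CC(X,2)$ and witness the desired failure of sequential closure; the Arens-fan construction needs an interaction between the two phenomena that your sketch does not supply. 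The paper instead (Proposition~\ref{p:Ascoli-C(X,2)-Pol}, Lemma~\ref{l:Ascoli-L}, Proposition~\ref{p:Ascoli-L-Pol}) finds inside $X$ a closed copy of the concrete space $L=M\cup(\NN\times\mathfrak s)$---the metric fan $M$ coming from non-local-compactness via van Douwen's lemma, the $\NN\times\mathfrak s$ part from $X'$ non-compact---and then shows directly that $\CC(L,2)$ is not even Ascoli, passing back to $\CC(X,2)$ by a retraction argument. This cleanly separates the two ingredients and avoids the transfer problem you would face.
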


T.~Banakh and S.~Gabriyelyan \cite{BG} posed the following problem: Characterize separable metrizable spaces $X$ for which the function space $\CC(X,2)$ is an Ascoli space. We prove the following result which also shows that the Fr\'{e}chet--Urysohn property, sequentiality and $k$-space property differ on spaces of the form $\CC(X,2)$. While these properties coincide for $\CC(X)$ by Pytkeev's theorem \cite{Pyt2}: for a Tychonoff space $X$, the space $\CC(X)$ is a $k$-space if and only if it is Fr\'{e}chet--Urysohn.
\begin{theorem} \label{t:Ascoli-C(X,2)-A}
Let $X$ be  a zero-dimensional metric space $X$. Then:
\begin{enumerate}
\item[{\rm (i)}]  $\CC(X,2)$ is  Ascoli if and only if $\CC(X,2)$ is a $k_\IR$-space if and only if either  $X$ is locally compact or  $X$ is not locally compact but the derived set $X'$ is compact;
\item[{\rm (ii)}] $\CC(X,2)$ is a $k$-space if and only if either $X=L\cup D$ is a topological sum of a separable metrizable locally compact space $L$ and a discrete space $D$ or $X$ is not locally compact but the derived set $X'$ is compact;
\item[{\rm (iii)}] $\CC(X,2)$ is a sequential space  if and only if $X$ is a Polish space and either $X$ is locally compact or $X$ is  not locally compact but the derived set $X'$ is compact;
\item[{\rm (iv)}] $\CC(X,2)$ is a Fr\'{e}chet--Urysohn space  if and only if $\CC(X,2)$ is a Polish space if and only if $X$ is a Polish  locally compact space.
\end{enumerate}
\end{theorem}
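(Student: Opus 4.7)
The plan is to prove the four equivalences simultaneously by first describing $\CC(X,2)$ structurally in each admissible case, then ruling out the remaining cases using the previously cited results together with ad hoc constructions. I use throughout that clopen subsets of $X$ biject with $\CC(X,2)$, that $\CC(X,2)$ is a closed topological subgroup of $\CC(X)$, and that every zero-dimensional paracompact locally compact space decomposes as a topological sum of $\sigma$-compact clopen pieces.

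For the ``if'' directions I proceed case by case. When $X$ is Polish locally compact, $X$ is a hemicompact $k$-space, so $\CC(X)$ is Polish and $\CC(X,2)$ inherits Polish-ness as a closed subspace, giving (iv). When $X = L \cup D$ with $L$ separable metric locally compact and $D$ discrete, the factoring $\CC(X,2) \cong \CC(L,2) \times 2^D$ displays $\CC(X,2)$ as a product of a Polish space with a compact group, hence a $k$-space. For general locally compact zero-dimensional metric $X$, the decomposition $X = \bigsqcup_\alpha X_\alpha$ into $\sigma$-compact clopen pieces yields $\CC(X,2) = \prod_\alpha \CC(X_\alpha,2)$, a product of Polish groups in which compact $X_\alpha$ contribute countable discrete factors; I would show this product is Ascoli/$k_\IR$ by isolating the compact part (which gives a compact group as a factor) from the countably-based metric part. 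Finally, when $X$ is not locally compact but $X'$ is compact, the restriction $\pi: \CC(X,2) \to \CC(X',2)$ lands in a countable discrete group, so $\CC(X,2)$ splits as a countable disjoint union of clopen cosets of $\ker \pi$; analyzing $\ker \pi$ as a function space over the discrete set $X \setminus X'$ constrained by convergence at $X'$ should give the $k$-space and Ascoli properties, upgrading to sequentiality when $X$ is additionally Polish.

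For the converse directions I combine the previously cited results with a separability upgrade. If $X$ fails to be separable, its isolated points decompose into uncountably many clopen singletons, so $\CC(X,2)$ admits a quotient onto $2^\kappa$ for uncountable $\kappa$, which is not sequential; hence sequentiality or Fr\'{e}chet--Urysohnness of $\CC(X,2)$ forces $X$ Polish. Theorem~\ref{t:GTZ-sequen} then closes (iii). For (ii) in the locally compact case, Pol's Theorem~\ref{t:Pol-k-space} applied to $\CC(X,\II)$ (which contains $\CC(X,2)$ closedly) constrains $X$ into the $L \cup D$ form, with suitable adjustments since we only assume the $k$-space property of $\CC(X,2)$ itself. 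Corollary~\ref{c:Pol-k-space} recognizes the Polish locally compact case. To finish (iv) I rule out $X$ Polish with $X'$ compact but not locally compact by exhibiting a closed copy of the sequential fan $V(\omega)$ inside $\CC(X,2)$, blocking Fr\'{e}chet--Urysohnness.

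The main obstacle is the ``only if'' in (i): showing that when $X$ is neither locally compact nor has $X'$ compact, $\CC(X,2)$ fails to be Ascoli. Both pathologies must be exploited in tandem: I would select a countable closed discrete sequence $(y_n)_{n \in \w} \subseteq X'$ and for each $n$ a sequence of isolated points $(x_{n,k})_k$ accumulating at $y_n$, and additionally a point $z$ witnessing failure of local compactness together with an escaping sequence there. From these data I would build a compact set $K \subseteq \CC(X,2)$ together with a point $f_0 \in \overline{K}$ realizing a non-Ascoli configuration — essentially embedding a known non-Ascoli template such as the double sequential fan $S_2$ as a closed subspace of $\CC(X,2)$, from which non-Ascoli transfers up since the Ascoli property is inherited by closed subspaces. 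The delicate verification is that the compact-open topology on $\CC(X,2)$ genuinely captures the pathological behavior arising simultaneously from the infinitely many bad points in $X'$ and from the local-compactness failure, rather than one of these contributions being absorbed into a locally continuous structure.
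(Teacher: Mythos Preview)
Your outline has the right architecture but two of the load-bearing steps do not hold as written.

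\textbf{The ``only if'' in (i).} Your plan selects a closed discrete sequence $(y_n)\subseteq X'$ and then, for each $n$, a sequence of \emph{isolated} points $(x_{n,k})_k$ converging to $y_n$. But $X$ may have no isolated points at all (take $X=2^\omega\times D(\omega_1)$), so this choice is illegal. Even replacing the $x_{n,k}$ by arbitrary points, you then need concrete $\{0,1\}$-valued functions supported near these points, and without isolation you cannot simply declare $f(x_{n,k})=1$ and $f\equiv 0$ elsewhere. The paper avoids this by not working inside $X$ directly: using van Douwen's lemma it finds a closed copy of the metric fan $M$ (from non-local-compactness) and, separately, a closed copy of $\NN\times\mathfrak{s}$ (from non-compactness of $X'$), hence a closed copy of the fixed model space $L=M\cup(\NN\times\mathfrak{s})$. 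A delicate explicit computation (the paper's Lemma on the family $\{U_{p,q}\}$ and the functions $f_{p,q}$) shows $\CC(L,2)$ is not Ascoli via Proposition~\ref{p:Ascoli-sufficient}. The transfer to $\CC(X,2)$ is not ``closed subspace of $\CC(X,2)$'' as you suggest, but rather: $L$ is a retract of $X$ (Borges/Engelking, using zero-dimensionality and stratifiability), so $\CC(L,2)$ is a retract of $\CC(X,2)$, and Ascoli passes to retracts. Your proposed ``embed $S_2$ and use that Ascoli is closed-hereditary'' would need an extension operator you never invoke.

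\textbf{The ``only if'' in (ii).} You write that Pol's Theorem~\ref{t:Pol-k-space} for $\CC(X,\II)$ constrains $X$, since $\CC(X,2)$ sits closedly inside $\CC(X,\II)$. This is the wrong direction: knowing that the closed subspace $\CC(X,2)$ is a $k$-space tells you nothing about $\CC(X,\II)$. The paper's argument is independent of Pol: in the locally compact case one has $\CC(X,2)=\prod_{i\in I}\CC(X_i,2)$ with each factor Polish, and if uncountably many $X_i$ are non-discrete then each such $\CC(X_i,2)$ contains a closed copy of $\NN$ (via a clopen infinite compact $K_i\subseteq X_i$), giving a closed copy of $\NN^{\omega_1}$ in the product, which is not a $k$-space. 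This forces all but countably many $X_i$ to be discrete, i.e.\ $X=L\cup D$.

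Two smaller points: for the locally compact $\Rightarrow k_\IR$ direction you will need Noble's theorem that arbitrary products of first-countable (hence of Polish) spaces are $k_\IR$; ``isolating the compact part'' does not suffice since uncountably many factors may be non-compact Polish. And your separability argument for (iii) (``non-separable $X$ has uncountably many isolated points'') is false in general but becomes correct once you have already reduced, via (ii), to the two structural cases --- you should make that dependency explicit.
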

In Theorems  \ref{t:Ascoli-C(X,2)-Ascoli}--\ref{t:Ascoli-C(X,2)-seq} below we obtain also a  topological and algebraic structure of function spaces $\CC(X,2)$ for cases (i)-(iii).
Note that (iii) of Theorem \ref{t:Ascoli-C(X,2)-A} generalizes Theorem \ref{t:GTZ-sequen} by showing that the assumption on the space $X$ to be Polish can be omitted: the sequentiality of  $\CC(X,2)$  implies that $X$ is a Polish space. Let us remark also that our prove of (iii) essentially differs from the proof of Theorem \ref{t:GTZ-sequen}.

For topological spaces $X$ and $Y$, we denote by $C_p(X,Y)$ the space $C(X,Y)$ endowed with the topology of pointwise convergence.  If $Y=\II$, R.~Pol proved the following theorem.
\begin{theorem}[\cite{Pol-1974}] \label{t:Pol-Ck-normal}
For a metrizable space $X$, the following assertions are equivalent: (i) $\CC(X,\II)$ is normal, (ii) $C_p(X,\II)$ is normal, (iii) $\CC(X,\II)$ is Lidel\"{o}f, (iv) $X'$ is separable.
\end{theorem}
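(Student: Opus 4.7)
The plan is to establish the cycle $(iv)\Rightarrow(iii)\Rightarrow(i)\Rightarrow(iv)$ together with the parallel equivalence $(iv)\Leftrightarrow(ii)$. The step $(iii)\Rightarrow(i)$ is the general-topology fact that a regular Lindel\"of space is normal, applied to $\CC(X,\II)$, which is Tychonoff as a subspace of the cube $\II^X$.

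The heart of the argument is $(iv)\Rightarrow(iii)$. I would decompose $X=X'\sqcup D$, with $D$ the open set of isolated points of $X$, and fix a countable dense $A\subseteq X'$. Any compact $K\subseteq X$ meets $D$ in a finite set, and $K\cap X'$ is compact in the separable (hence second-countable) metric space $X'$; so the compact-open topology on $\CC(X,\II)$ is generated by subbasic sets $[K,U]$ with $K$ either a compact subset of $X'$ or a singleton of $D$. The aim is to realize $\CC(X,\II)$ as a closed subspace of the Lindel\"of product $\CC(X',\II)\times \II^{D}$ (the cube $\II^{D}$ being compact), where the gluing relations encode continuity of $f$ at points of $X'$ that are accumulated by $D$. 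The Lindel\"ofness of $\CC(X',\II)$ when $X'$ is separable metric is itself a delicate step (and cannot be obtained by invoking the statement we are proving); it has to be carried out by a direct cover-refinement argument using the countable base of $X'$.

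For $(i)\Rightarrow(iv)$ I argue contrapositively. If $X'$ is not separable then, as a metric space, it contains an uncountable closed discrete subset $\{x_\alpha:\alpha<\omega_1\}$. Since each $x_\alpha$ is non-isolated in $X$, it is the limit of a sequence in $X\setminus\{x_\alpha\}$; a disjointness argument in the metric then lets me choose these sequences with pairwise disjoint ranges, giving a closed subspace $Y\cong\bigsqcup_{\alpha<\omega_1}(\w+1)$ of $X$. Dugundji's extension theorem provides a continuous right inverse to the restriction $\CC(X,\II)\to\CC(Y,\II)$, so $\CC(Y,\II)\cong(\CC(\w+1,\II))^{\omega_1}$ embeds as a closed subspace of $\CC(X,\II)$. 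Since $\CC(\w+1,\II)$ is a non-compact Polish space it contains a closed discrete copy of $\NN$, and consequently $(\CC(\w+1,\II))^{\omega_1}$ contains a closed copy of $\NN^{\omega_1}$, which is non-normal by the classical theorem on uncountable powers of a discrete countable space. Normality being hereditary to closed subspaces, this contradicts $(i)$.

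The direction $(iv)\Rightarrow(ii)$ mirrors $(iv)\Rightarrow(iii)$ but is easier, since the pointwise topology is coarser and, for $X'$ separable metric, admits a more transparent countable-network description; and $(ii)\Rightarrow(iv)$ runs exactly as $(i)\Rightarrow(iv)$ with $C_p$ in place of $\CC$, since the product $(C_p(\w+1,\II))^{\omega_1}$ still contains a closed copy of $\NN^{\omega_1}$. The principal obstacle is $(iv)\Rightarrow(iii)$: the Lindel\"of representation of $\CC(X,\II)$ when isolated points of $X$ cluster on $X'$ forces careful bookkeeping of continuity constraints, and this combinatorial step is the real substance of the theorem. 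The non-normality direction, by contrast, is conceptually straightforward once the product $(\CC(\w+1,\II))^{\omega_1}$ has been located as a closed subspace of $\CC(X,\II)$.
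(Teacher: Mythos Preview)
The paper does not prove this theorem---it is quoted from \cite{Pol-1974}---but it proves the parallel Theorem~\ref{t:Ascoli-C(X,2)-N} for $Y=2$ by what it declares to be an ``identical proof'', so that is the natural comparison. Your logical skeleton matches the paper's: the chain $(iv)\Rightarrow(iii)\Rightarrow(i),(ii)$, with the contrapositive $(i)/(ii)\Rightarrow(iv)$ carried out by locating a closed copy of $\NN^{\omega_1}$.

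For the non-normality direction your route differs from the paper's. Following Pol's Lemma~1, the paper (Lemma~\ref{l:Ascoli-C(X,2)-Pol}) uses paracompactness of $X$ to extract a discrete family $\{F_i:i<\omega_1\}$ of open sets with $z_i\in F_i\cap X'$, and then builds by hand, inside each factor $A_i=\{f:f(X\setminus F_i)=\{0\}\}$, a closed discrete countable set of functions; the product $\prod_i A_i$ is closed already in $C_p(X,\II)$ and contains $\NN^{\omega_1}$. You instead find a closed $Y\cong\bigsqcup_{\omega_1}(\omega+1)$ and invoke Dugundji extension to realize $(\CC(\omega+1,\II))^{\omega_1}$ as a retract of $\CC(X,\II)$. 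Both are correct; the paper's construction is more elementary (no extension theorem needed) and lands directly in $C_p$, whereas yours is tidier once Dugundji is granted.

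There is, however, a genuine error in your sketch of $(iv)\Rightarrow(iii)$. The assertion ``any compact $K\subseteq X$ meets $D$ in a finite set'' is false exactly in the case you later call the principal obstacle: if isolated points accumulate on $X'$ (e.g.\ $X=\{0\}\cup\{1/n\}$, $D=\{1/n\}$, $K=X$) then $K\cap D$ is infinite. As a consequence the restriction map $\CC(X,\II)\to \CC(X',\II)\times\II^{D}$ is continuous and injective but \emph{not} an embedding: the sup-norm neighbourhood $[X;\varepsilon]$ is not the trace of any product-open set, so the Lindel\"of-product argument does not go through. The paper does not attempt this step at all; it simply cites Proposition~1 of \cite{Pol-1974} for $(iv)\Rightarrow(iii)$, and Pol's actual argument there is not the product embedding you propose.
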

It turns out that the same holds also for $Y=2$ with an identical proof.
\begin{theorem} \label{t:Ascoli-C(X,2)-N}
For a zero-dimensional metric space $X$, the following assertions are equivalent: (i) $\CC(X,2)$ is normal, (ii) $C_p(X,2)$ is normal, (iii) $\CC(X,2)$ is Lidel\"{o}f, (iv) $X'$ is separable.
\end{theorem}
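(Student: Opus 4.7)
The plan is to mimic R.~Pol's proof of Theorem~\ref{t:Pol-Ck-normal} essentially verbatim, using one key reduction: $\CC(X,2)$ is closed in $\CC(X,\II)$ and $C_p(X,2)$ is closed in $C_p(X,\II)$, since both compact-open and pointwise convergence preserve the property that all values of $f$ lie in the closed subset $\{0,1\}\subset\II$.

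For the easy direction $(\mathrm{iv})\Rightarrow(\mathrm{i}),(\mathrm{ii}),(\mathrm{iii})$, I would invoke Theorem~\ref{t:Pol-Ck-normal} to get that $\CC(X,\II)$ is Lindel\"of and $C_p(X,\II)$ is normal, and then transfer these properties to the closed subspaces $\CC(X,2)$ and $C_p(X,2)$. This immediately yields (iii); gives (i) via the classical fact that a regular Lindel\"of space is normal (and $\CC(X,2)$ is Tychonoff as a topological group); and gives (ii) because closed subspaces of normal spaces are normal.

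For the converse, I would argue by contrapositive: assuming $X'$ is non-separable, I produce non-normal closed subspaces of both $\CC(X,2)$ and $C_p(X,2)$, simultaneously refuting (i), (ii) and, since $\CC(X,2)$ is regular, also (iii). The starting data is a closed discrete uncountable set $Y=\{y_\alpha:\alpha<\omega_1\}\subseteq X'$, obtained from the standard fact that a non-separable metric space contains an uncountable $\varepsilon$-discrete subset. Using zero-dimensionality and paracompactness I select pairwise disjoint clopen neighborhoods $U_\alpha\ni y_\alpha$ with $U_\alpha\cap Y=\{y_\alpha\}$, together with sequences $x^\alpha_n\in U_\alpha\setminus\{y_\alpha\}$ converging to $y_\alpha$ (available since $y_\alpha\in X'$). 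This is precisely the combinatorial skeleton on which Pol's construction operates; because the witnesses to non-normality he assembles are characteristic functions of clopen sets supported on unions of the $U_\alpha$'s, they are automatically $\{0,1\}$-valued.

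The main obstacle is checking that no step of Pol's proof tacitly uses the connected or convex structure of $\II$. Since $X$ is zero-dimensional, the clopen algebra supplies enough continuous $\{0,1\}$-valued functions to play the role of every $\II$-valued interpolation step in Pol's argument, and locally finite unions of clopen sets are again clopen, so a characteristic function on an arbitrary $\aleph_0$-indexed subfamily of $\{U_\alpha\}$ remains continuous. Once this routine line-by-line verification is carried out, the proof of Theorem~\ref{t:Pol-Ck-normal} delivers Theorem~\ref{t:Ascoli-C(X,2)-N} without further modification.
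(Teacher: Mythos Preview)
Your proposal is correct and follows essentially the same approach as the paper: the paper explicitly states that the proof is ``identical'' to Pol's, reproduces Pol's Lemma~1 for the $2$-valued case (embedding a closed copy of $\NN^{\omega_1}$ via the same discrete family of clopen sets and convergent sequences you describe), and then cites Pol's Proposition~1 for $(\mathrm{iv})\Rightarrow(\mathrm{iii})$. Your closed-subspace reduction for the easy direction and your observation that Pol's witnesses are characteristic functions of clopen sets are exactly the content of the paper's argument.
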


In the next theorem we characterize metric spaces $X$ for which  $\CC(X,2)$ has countable tightness.
\begin{theorem} \label{t:Ascoli-C(X,2)-Tight}
For a zero-dimensional metric space $X$ the following assertions are equivalent: (i) $\CC(X,2)$ has countable tightness, (ii) $\CC(X,2)$ is a $\Pp_0$-space, (iii) $C_p(X,2)$ has countable tightness, (iv) $X$ is separable.
\end{theorem}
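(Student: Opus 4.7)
The plan is to establish (ii) $\Rightarrow$ (i) $\Rightarrow$ (iv) $\Rightarrow$ (ii) together with (iv) $\Rightarrow$ (iii) $\Rightarrow$ (iv). Of these, (ii) $\Rightarrow$ (i) is immediate, since every space with a countable Pytkeev network has countable tightness. The implication (iv) $\Rightarrow$ (iii) follows from classical $C_p$-theory: if $X$ is separable metric then $X$ has a countable network, so by Arkhangelskii's theorem $C_p(X)$ has a countable network, whence its subspace $C_p(X,2)$ has a countable network and in particular countable tightness.

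The two contrapositives (i) $\Rightarrow$ (iv) and (iii) $\Rightarrow$ (iv) can be handled by a single construction. Assume $X$ is not separable. Since $X$ is metric and hence not Lindel\"{o}f, it contains an uncountable closed discrete subset $D = \{d_\alpha : \alpha < \w_1\}$. Collectionwise normality of metric spaces combined with zero-dimensionality yields a discrete family $\{V_\alpha : \alpha < \w_1\}$ of pairwise disjoint clopen sets with $d_\alpha \in V_\alpha$; as a discrete family of clopen sets has clopen union, $V := \bigcup_\alpha V_\alpha$ is clopen and $f_\infty := \mathbf{1}_V \in C(X,2)$. For each finite $F \subset \w_1$ set $g_F := \mathbf{1}_{\bigcup_{\alpha\in F} V_\alpha} \in C(X,2)$ and let $A := \{g_F : F \in [\w_1]^{<\w}\}$. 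By local finiteness of $\{V_\alpha\}$, every compact $K \subseteq X$ meets only finitely many $V_\alpha$, namely those in $F_K := \{\alpha : V_\alpha \cap K \neq \emptyset\}$, which forces $f_\infty|_K = g_{F_K}|_K$. Consequently $f_\infty$ belongs to the compact-open closure of $A$, and a fortiori to its pointwise closure. On the other hand, for any countable $A' = \{g_{F_n} : n \in \w\} \subseteq A$, pick $\beta \in \w_1 \setminus \bigcup_n F_n$: the pointwise basic neighborhood $\{h : h(d_\beta) = 1\}$ contains $f_\infty$ but misses $A'$, so $f_\infty \notin \overline{A'}$ in either topology. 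Hence neither $\CC(X,2)$ nor $C_p(X,2)$ has countable tightness, which proves both contrapositives.

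The main obstacle is (iv) $\Rightarrow$ (ii): one must exhibit an explicit countable Pytkeev network on $\CC(X,2)$ under the hypothesis that $X$ is separable, zero-dimensional, and metric. I would fix a countable clopen base $\BB$ of $X$ and consider the countable family
\[
\mathcal{N} := \bigl\{\{f \in \CC(X,2) : f \equiv \eps_i \text{ on } B_i,\ i=1,\ldots,n\} : n \in \w,\ B_i \in \BB,\ \eps_i \in \{0,1\}\bigr\}.
\]
The verification of the Pytkeev property hinges on the following reduction: any compact-open neighborhood $W = \{h : h|_K = f|_K\}$ of $f \in \CC(X,2)$ can be tightened to a member of $\mathcal{N}$. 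Since $f$ is locally constant as a map into the discrete space $2$, the compact set $K$ is covered by finitely many $B_1,\ldots,B_n \in \BB$ on each of which $f$ is constant, and the corresponding $N \in \mathcal{N}$ with the matching $\eps_i$'s then satisfies $f \in N \subseteq W$. Given a set $A$ accumulating at $f$, the elements of $A$ lying in this $N$ supply the required infinite intersection. This reduction step is the technical core of the theorem and is where the zero-dimensional separable metric hypothesis enters essentially.
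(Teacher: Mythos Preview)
Your treatment of the implications other than (iv)$\Rightarrow$(ii) is correct and matches the paper.  For (i)$\Rightarrow$(iv) and (iii)$\Rightarrow$(iv) the paper embeds $2^{\w_1}$ via $T\big((z_\alpha)\big)=\sum_\alpha z_\alpha\chi_{V_\alpha}$ and then quotes the fact that $2^{\w_1}$ has uncountable tightness; your set $A=\{g_F:F\in[\w_1]^{<\w}\}$ with limit $f_\infty=\mathbf{1}_V$ is exactly the standard witness to that fact, pushed forward by $T$, so the two arguments are the same one unpacked.  Likewise (iv)$\Rightarrow$(iii) and (ii)$\Rightarrow$(i) agree with the paper's citations.

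The implication (iv)$\Rightarrow$(ii), however, has a genuine gap.  You correctly isolate the candidate family
\[
\mathcal{N}=\Bigl\{\bigcap_{i\le n}\{h:h|_{B_i}\equiv\eps_i\}:B_i\in\BB,\ \eps_i\in 2\Bigr\},
\]
and your covering argument does show that $\mathcal{N}$ is a \emph{network}: given $f\in W=[K;1/2]$ you produce $N\in\mathcal{N}$ with $f\in N\subseteq W$.  But the last line---``the elements of $A$ lying in this $N$ supply the required infinite intersection''---does not follow.  The set $N=\bigcap_i\{h:h|_{B_i}\equiv\eps_i\}$ is closed, but it is a \emph{neighborhood} of $f$ only when each $B_i$ is compact, which forces $X$ to be locally compact.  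For a non-locally-compact $X$ (say $X=\mathbb{Q}$ or the metric fan $M$) the basic clopen sets $B_i$ are not compact, and an accumulation set $A$ for $f$ need not meet your particular $N$ at all: functions in $A$ agree with $f$ on $K$ but are free to take arbitrary values on $B_i\setminus K$.  So what you have actually proved is only that $\CC(X,2)$ is cosmic, which yields (i) directly but not (ii).

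Verifying the \emph{Pytkeev} property for this $\mathcal{N}$ is exactly the non-trivial content of Banakh's theorem; one must argue more carefully, allowing $N'\in\mathcal{N}$ with $N'\subseteq U$ but \emph{not} necessarily containing $f$, and use that $\BB$ is a $k$-network for $X$.  The paper does not reprove this but simply cites \cite{Banakh} (see also \cite[Corollary~6.4]{GK-GMS1}) for (iv)$\Rightarrow$(ii).
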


\section{Auxiliary results} \label{seq:Ascoli-C(X,2)-Pre}

We start from the definitions of the following well-known notions. A topological space $X$ is called
\begin{itemize}
\item[$\bullet$] {\em Fr\'{e}chet-Urysohn} if for any cluster point $a\in X$ of a subset $A\subset X$ there is a sequence $\{ a_n\}_{n\in\NN}\subset A$ which converges to $a$;
\item[$\bullet$] {\em sequential} if for each non-closed subset $A\subset X$ there is a sequence $\{a_n\}_{n\in\NN}\subset A$ converging to some point $a\in \bar A\setminus A$;
\item[$\bullet$] a {\em $k$-space} if for each non-closed subset $A\subset X$ there is a compact subset $K\subset X$ such that $A\cap K$ is not closed in $K$;
\item[$\bullet$] a {\em $k_\IR$-space} if  a real-valued function $f$ on $X$ is continuous if and only if its restriction $f|_K$ to any compact subset $K$ of $X$ is continuous.
\end{itemize}

For topological spaces $X$ and $Y$, denote by $\psi: X\times\CC(X,Y)\rightarrow Y$, $\psi(x,f):=f(x)$, the evaluation map. Recall that a subset $\KK$ of $\CC(X,Y)$ is {\em evenly continuous} if the restriction of $\psi$ onto $X\times \KK$ is jointly continuous, i.e. for any $x\in X$, each $f\in\KK$  and every neighborhood $O_{f(x)}\subset Y$ of $f(x)$ there exist neighborhoods $U_f\subset \KK$ of $f$ and $O_x\subset X$ of $x$ such that $U_f(O_x):=\{g(y):g\in U_f,\;y\in O_x\}\subset O_{f(x)}$.
Following \cite{BG}, a regular (Hausdorff) space $X$ is called an {\em Ascoli space} if each compact subset $\KK$ of $\CC(X)$ is evenly continuous.
It is easy to see that a space $X$ is Ascoli if and only if the canonical valuation map $X\hookrightarrow \CC(\CC(X))$ is an embedding, see \cite{BG}.  By Ascoli's theorem \cite[3.4.20]{Eng}, each $k$-space is Ascoli. N.~Noble \cite{Noble} proved that any $k_\IR$-space is Ascoli.

For a sequence $\{ G_n \}_{n\in \NN}$  of  groups, the \emph{direct sum} of $G_n$ is denoted by
\[
\bigoplus_{n\in \NN} G_n :=\left\{ (g_n)_{n\in \NN} \in \prod_{n\in \NN} G_n : \; g_n = e_n \mbox{ for almost all } n \right\}.
\]

Let $(G,\tau)$ be a topological group. The filter of all open neighborhoods of the identity $e$ is denoted by $\mathcal{N}(G)$.  Let $\{ (G_n, \tau_n) \}_{n\in \NN}$ be a sequence of (Hausdorff)  topological groups. For every $n\in \NN$ fix $U_n \in \mathcal{N}(G_n)$ and put
\[
 \prod_{n\in \NN} U_n :=\left\{ (g_n)_{n\in \NN} \in \prod_{n\in \NN} G_n : \; g_n \in U_n \mbox{ for  all } n\in \NN \right\}.
\]
Then the sets of the form $ \prod_{n\in \NN} U_n $, where $U_n \in \mathcal{N}(G_n)$ for every $n\in \NN$, form a neighborhood basis at the unit of a (Hausdorff) group topology $\mathcal{T}_b$ on $\prod_{n\in \NN} G_n$ that is called  {\em the box topology}. The items (i) and (ii) of the next proposition are well-known.
\begin{proposition} \label{p:Ascoli-C(X,2)-Box}
Let $\{ G_n\}_{n\in \NN}$ be a sequence of compact topological groups and let $G:= \big( \bigoplus_{n\in \NN} G_n, \mathcal{T}_b \big)$. Then:
\begin{enumerate}
\item[{\rm (i)}] the group $G$ is a $k$-space (even a $k_\w$-space);
\item[{\rm (ii)}] if $G_n$ is metrizable for every $n\in \NN$, then $G$ is a sequential space;
\item[{\rm (iii)}] if $G_n$ is metrizable for every $n\in \NN$, then $G$ is a Fr\'{e}chet--Urysohn space if and only if $G$ is a separable locally compact metrizable space if and only if $G_n$ is finite for all but finitely many $n$.
\end{enumerate}
\end{proposition}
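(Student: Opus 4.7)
The plan is to treat the three items in turn, noting that (i) and (ii) are essentially standard statements about the box topology on a countable coproduct of compact metrizable groups. For (i), I set $K_m := \{(g_n)\in G : g_n = e_n \text{ for all } n>m\}$, canonically identified with the compact product $\prod_{n\le m}G_n$, so that $G = \bigcup_m K_m$ with $K_m \subset K_{m+1}$ a closed embedding. The key (and standard) fact is that the box topology on $G$ coincides with the topology coherent with the chain $(K_m)$; equivalently, $G$ is the inductive limit of the $K_m$ in the category of topological groups. This immediately displays $G$ as a $k_\w$-space, and hence as a $k$-space.

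For (ii), under the metrizability assumption each $K_m$ is a finite product of compact metric groups, hence itself compact metric. It is classical that a $k_\w$-space whose defining compacta are metrizable is sequential: using (i), $A\subset G$ is closed iff $A\cap K_m$ is closed in $K_m$ for every $m$, iff $A\cap K_m$ is sequentially closed in $K_m$ (since $K_m$ is metric); and because every compact subset of the $k_\w$-space $G$ is absorbed by some $K_m$, this is in turn equivalent to $A$ being sequentially closed in $G$.

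The real content is (iii). The implication chain ``$G_n$ finite for all but finitely many $n$'' $\Rightarrow$ ``$G$ separable locally compact metrizable'' $\Rightarrow$ ``$G$ Fr\'{e}chet--Urysohn'' is easy: if $G_n$ is finite for every $n>N$, then each such $G_n$ is discrete, the box topology on $\bigoplus_{n>N}G_n$ is the discrete topology, and $G$ is topologically isomorphic to the product of the compact metric group $\prod_{n\le N}G_n$ with a countable discrete group. Such a product is clearly separable, locally compact and metrizable, and therefore Fr\'{e}chet--Urysohn.

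The nontrivial direction is the converse: if $G$ is Fr\'{e}chet--Urysohn, then only finitely many $G_n$ are infinite. I argue by contrapositive. Suppose there are indices $n_0<n_1<n_2<\cdots$ with every $G_{n_i}$ infinite. Since each $G_{n_i}$ is compact metric and infinite, $e_{n_i}$ is non-isolated; choose a sequence $(g_i)_{i\ge 1}\subset G_{n_0}\setminus\{e_{n_0}\}$ with $g_i\to e_{n_0}$, and for each $i\ge 1$ a sequence $(h^{(k)}_{n_i})_k\subset G_{n_i}\setminus\{e_{n_i}\}$ with $h^{(k)}_{n_i}\to e_{n_i}$ as $k\to\infty$. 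Let $y_{i,k}\in G$ be the element with $g_i$ in coordinate $n_0$, $h^{(k)}_{n_i}$ in coordinate $n_i$, and $e_m$ in every other coordinate, and put $A:=\{y_{i,k}:i,k\ge 1\}$. Given any basic box neighbourhood $\prod_m V_m$ of $e$, I first choose $i$ with $g_i\in V_{n_0}$ and then $k$ with $h^{(k)}_{n_i}\in V_{n_i}$; this gives $y_{i,k}\in\prod_m V_m$, so $e\in\overline{A}$. Using (as in the proof of (ii)) that a sequence converging to $e$ in the box topology is eventually supported in some $K_N$, a hypothetical sequence $(y_{i(j),k(j)})_j\to e$ would force $n_{i(j)}$ to be bounded, hence, after passing to a subsequence, $i(j)$ constantly equal to some $i_0$; but then the $n_0$-coordinate of $y_{i(j),k(j)}$ is permanently $g_{i_0}\ne e_{n_0}$, contradicting convergence to $e$. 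The main obstacle is precisely this last construction: naive single-coordinate analogues either fail to have $e$ in their closure (the neighbourhood $\prod_m V_m$ can be chosen to exclude finitely many prescribed values in each factor) or else contain trivial convergent rows, so one is forced to use two coordinates, with the fixed ``seed'' position $n_0$ present in every element of $A$ to block every candidate convergent subsequence while still ensuring $e\in\overline{A}$.
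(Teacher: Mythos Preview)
Your argument is correct. Parts (i) and (ii) match the paper's approach exactly: the paper also observes that the box topology coincides with the $k_\omega$-topology determined by the chain $K_m=\prod_{i\le m}G_i$, and then notes (citing \cite{ChMPT}) that a $k_\omega$-space whose defining compacta are metrizable is sequential, which is precisely the statement you prove by hand.

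The genuine difference is in (iii). The paper disposes of this item in one line by citing Proposition~3.18 and Corollary~2.17 of \cite{GKL}, results about topological groups with a small base. You instead give a direct, self-contained ``sequential fan'' construction: the two-coordinate set $A=\{y_{i,k}\}$ with a fixed seed position $n_0$ is exactly what is needed, and your analysis of why $e\in\overline{A}$ yet no sequence from $A$ converges to $e$ is correct. Your approach buys independence from the machinery of \cite{GKL} and makes the proposition entirely elementary, at the cost of a few more lines; the paper's approach buys brevity and situates the result within a broader framework. Both are valid, and your explicit construction is arguably more illuminating for this specific statement.
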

\begin{proof}
(i) It is easy to see that the topology $\mathcal{T}_b$ of $G$ is defined by the sequence $\{ \prod_{i\leq n} G_i\}_{n\in\NN}$ of compact groups. So $G$ is  a $k_\w$-space (in particular, $G$ is hemicompact).
(ii) follows from (i) and \cite[Lemma 1.5]{ChMPT} (note that every compact subset of $G$ is contained in $\prod_{i\leq n} G_i$ for some $n\in\NN$). (iii) follows from Proposition 3.18 and Corollary 2.17 of \cite{GKL}.
\end{proof}

If $G_n =G$ for every $n\in\NN$, we set $G^\infty := \big( \bigoplus_{n\in \NN} G_n, \mathcal{T}_b \big)$.
Proposition \ref{p:Ascoli-C(X,2)-Box} immediately implies the next result.
\begin{corollary}\label{c:C(X,2)}
For every infinite metrizable compact group $G$ the group $G^\infty$ (in particular, $(2^\w)^\infty$) is a sequential non-Fr\'{e}chet--Urysohn space.
\end{corollary}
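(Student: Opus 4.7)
The statement is an immediate corollary of Proposition \ref{p:Ascoli-C(X,2)-Box}, obtained by specializing to the constant sequence $G_n = G$. My plan is therefore to simply apply parts (ii) and (iii) of that proposition and verify that the hypotheses are met.

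First, I would observe that $G^\infty = \bigl(\bigoplus_{n\in\NN} G_n,\mathcal{T}_b\bigr)$ with $G_n = G$ for all $n$ fits the setup of Proposition \ref{p:Ascoli-C(X,2)-Box}: each $G_n$ is a compact topological group (being a copy of the compact group $G$), and each $G_n$ is metrizable. Thus part (ii) applies directly and gives that $G^\infty$ is a sequential space.

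Next, to see that $G^\infty$ is not Fr\'echet--Urysohn, I would invoke part (iii) of the same proposition. Since $G$ is infinite, $G_n = G$ is infinite for every $n\in\NN$; in particular, the condition ``$G_n$ is finite for all but finitely many $n$'' fails. By the equivalences in (iii), $G^\infty$ cannot be Fr\'echet--Urysohn. Finally, the parenthetical special case $(2^\w)^\infty$ follows from the main statement upon noting that the Cantor group $2^\w$ is an infinite compact metrizable (topological) group. There is no real obstacle here; the only thing to verify is that the constant sequence $(G_n)$ meets the hypotheses of Proposition \ref{p:Ascoli-C(X,2)-Box}, which is immediate.
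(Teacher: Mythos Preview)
Your proposal is correct and follows exactly the paper's approach: the paper simply states that the corollary is an immediate consequence of Proposition~\ref{p:Ascoli-C(X,2)-Box}, and you have spelled out precisely how parts (ii) and (iii) of that proposition yield sequentiality and the failure of the Fr\'echet--Urysohn property, respectively.
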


Recall that the family of subsets
\[
[C;\epsilon]:= \{ f\in \CC(X): |f(x)|<\epsilon \; \forall x\in C\},
\]
where $C$ is a compact subset of $X$ and $\epsilon >0$, forms a basis of open neighborhoods at the zero function $\mathbf{0} \in\CC(X)$.

We need the following generalization of the countably infinite metric fan $M$. Fix a sequence $\pmb{\kappa} =(\kappa_n)_{n\in\NN}$ of (non-zero) cardinal numbers. Denote by $M_{\pmb{\kappa}}$ the space
\[
M_{\pmb{\kappa}}:= \left( \bigcup_{n\in\NN} \kappa_n \times\{ n\}\right) \cup \{\infty\},
\]
where the points of $\bigcup_{n\in\NN} \kappa_n \times\{ n\}$ are isolated, and the basic neighborhoods of $\infty$  are
\[
U(n)=\left( \bigcup_{i\geq n} \kappa_i \times\{ i\}\right) \cup \{\infty\}, \quad n\in\NN.
\]
If $\alpha\times \{n\}\not= \beta\times\{ m\}$ and $n\leq m$, set
\[
\rho\big( \alpha\times \{n\}, \beta\times\{ m\}\big) = \rho\big( \alpha\times \{n\},\infty \big) =\frac{1}{n}.
\]
Then $\rho$ is a complete metric on $M_{\pmb{\kappa}}$. So the space $M_{\pmb{\kappa}}$ is a complete metrizable space. It is easy to see that $M_{\pmb{\kappa}}$ is locally compact at $\infty$ if and only if all but finitely many of cardinals $\kappa_n$ are finite. If $\kappa_n =\w$ for every $n\in\NN$, the space $M_{\pmb{\kappa}} =(\w\times\NN)\cup\{ \infty\}$ is called the {\em countably infinite metric fan} and is denoted by $M$; so $M$ is a Polish space which is not  locally compact at $\infty$. Set
\[
\CC^0\big( M_{\pmb{\kappa}}, 2\big):= \left\{ f\in \CC\big( M_{\pmb{\kappa}}, 2\big): \; f(\infty)=0 \right\}.
\]
So $\CC^0\big( M_{\pmb{\kappa}}, 2\big)$ is a clopen subgroup of $\CC\big( M_{\pmb{\kappa}}, 2\big)$ and $\CC\big( M_{\pmb{\kappa}}, 2\big)=\mathbb{Z}(2)\times \CC^0\big( M_{\pmb{\kappa}}, 2\big)$.

\begin{proposition} \label{p:Ascoli-M-k}
Let $\pmb{\kappa} =(\kappa_n)_{n\in\NN}$ be a sequence of cardinal numbers. Then:
\begin{enumerate}
\item[{\rm (i)}] The group $\CC\big( M_{\pmb{\kappa}}, 2\big)$ is topologically isomorphic to the direct sum $\mathbb{Z}(2) \oplus \bigoplus_{n\in\NN} 2^{\kappa_n}$ endowed with the box topology.
\item[{\rm (ii)}] $\CC\big( M_{\pmb{\kappa}}, 2\big)$ is a $k_\w$-space.
\item[{\rm (iii)}] $\CC\big( M_{\pmb{\kappa}}, 2\big)$ is a sequential space if and only if $\kappa_n \leq\w$ for every $n\in\NN$.
\item[{\rm (iv)}] The following assertions are equivalent:
\begin{enumerate}
\item[{\rm (iv${}_1$)}]
$\CC\big( M_{\pmb{\kappa}}, 2\big)$ is a Fr\'{e}chet--Urysohn space;
\item[{\rm (iv${}_2$)}] $\CC\big( M_{\pmb{\kappa}}, 2\big)$ is a locally compact Polish  abelian  group;
\item[{\rm (iv${}_3$)}] $\kappa_n \leq\w$ for every $n\in\NN$ and $\kappa_n $ is finite for almost all indices $n$.
\end{enumerate}
\end{enumerate}
\end{proposition}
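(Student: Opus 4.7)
The plan is to establish the structural description in (i) first and then derive (ii)--(iv) by combining (i) with Proposition~\ref{p:Ascoli-C(X,2)-Box} and standard closed-subspace heredity arguments. The anchor for (i) is the observation that since the codomain $2$ is discrete, continuity of $f\colon M_{\pmb\kappa}\to 2$ at $\infty$ forces $f\equiv f(\infty)$ on some tail $U(n_0)$; elsewhere $f$ is free on the isolated levels $\kappa_n\times\{n\}$. This yields a natural group isomorphism
\[
\Phi\colon \CC\bigl(M_{\pmb\kappa},2\bigr)\longrightarrow \mathbb{Z}(2)\oplus\bigoplus_{n\in\NN} 2^{\kappa_n}, \qquad \Phi(f)=\bigl(f(\infty),\,\bigl(f|_{\kappa_n\times\{n\}}+f(\infty)\bigr)_{n\in\NN}\bigr),
\]
whose second component lies in the algebraic direct sum precisely because $f$ is eventually $f(\infty)$. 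To upgrade $\Phi$ to a homeomorphism I would first characterize compact subsets of $M_{\pmb\kappa}$: since all points except $\infty$ are isolated and only sequences whose level index tends to $\infty$ can converge to $\infty$, a set $C\subset M_{\pmb\kappa}$ is compact iff it is closed and $C\cap(\kappa_n\times\{n\})$ is finite for every $n$. Basic compact-open neighbourhoods $[C;1/2]$ with $\infty\in C$ then translate directly into box neighbourhoods $\{0\}\times\prod_n U_n$, while those with $C$ finite and $\infty\notin C$ still contain such a box neighbourhood (and, conversely, every box basic neighbourhood is of the form $[C;1/2]$ for $C$ compact with $\infty\in C$). This two-sided comparison of bases gives (i).

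Parts (ii), the ``if'' direction of (iii), and the implication (iv${}_3$)$\Rightarrow$(iv${}_2$) then follow immediately by applying Proposition~\ref{p:Ascoli-C(X,2)-Box}(i), (ii), (iii) to the right-hand box sum, after noting that multiplying by the finite factor $\mathbb{Z}(2)$ preserves each of the properties $k_\w$, sequential, and locally compact Polish. The implication (iv${}_2$)$\Rightarrow$(iv${}_1$) is trivial.

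For the converse directions of (iii) and (iv), I would exploit the fact that both sequentiality and the Fr\'{e}chet--Urysohn property are hereditary to closed subspaces. Fixing any index $n_0$, the subgroup
\[
H_{n_0}:=\bigl\{f\in\CC(M_{\pmb\kappa},2):\; f(\infty)=0 \text{ and } f|_{\kappa_n\times\{n\}}\equiv 0 \text{ for all } n\neq n_0\bigr\}
\]
is closed in $\CC(M_{\pmb\kappa},2)$ and, under $\Phi$, is topologically isomorphic to the compact product group $2^{\kappa_{n_0}}$. If $\kappa_{n_0}>\w$, then $2^{\kappa_{n_0}}$ contains $2^{\w_1}$ as a closed subspace, and $2^{\w_1}$ is not sequential (the $\sigma$-product of countably supported points is a proper, dense, sequentially closed subset). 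Hence sequentiality of $\CC(M_{\pmb\kappa},2)$ forces $\kappa_n\leq\w$ for all $n$, giving the converse in (iii); the same conclusion holds a fortiori from the Fr\'{e}chet--Urysohn property, after which Proposition~\ref{p:Ascoli-C(X,2)-Box}(iii) upgrades (iv${}_1$) to (iv${}_3$).

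The principal obstacle is the topological half of (i). The subtlety is that a basic compact-open neighbourhood $[C;1/2]$ whose compact set $C$ does not contain $\infty$ imposes the ``joint'' condition $g_n|_{F_n}=\epsilon$ on the transported data, which is not itself a basic box neighbourhood at zero; one must check that it nevertheless contains the basic box neighbourhood requiring both $\epsilon=0$ and each $g_n|_{F_n}=0$, and conversely that every basic box neighbourhood is of compact-open form. Once the level-by-level description of compact subsets of $M_{\pmb\kappa}$ is in hand, this verification is direct, and the remaining parts are quick consequences of the technology already built up in Proposition~\ref{p:Ascoli-C(X,2)-Box}.
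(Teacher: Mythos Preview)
Your proposal is correct and follows essentially the same route as the paper: establish the box-sum description in (i) via the level-by-level characterization of compact subsets of $M_{\pmb\kappa}$, and then read off (ii)--(iv) from Proposition~\ref{p:Ascoli-C(X,2)-Box} together with the closed-subspace heredity of sequentiality (each $2^{\kappa_n}$ sits as a closed subgroup). The only cosmetic difference is that the paper first splits off the clopen subgroup $\CC^0(M_{\pmb\kappa},2)=\{f:f(\infty)=0\}$ and works there, so that every basic compact set can be taken to contain $\infty$; this sidesteps the ``$\infty\notin C$'' subtlety you flag, but your direct handling of it is also fine.
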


\begin{proof}
(i): It is enough to show that $\CC^0\big( M_{\pmb{\kappa}},2\big)$ is topologically isomorphic to the direct sum $G:= \bigoplus_{n\in\NN} 2^{\kappa_n}$ endowed with the box topology.
Define the map $F: \CC^0\big( M_{\pmb{\kappa}},2\big) \to  G$ by the rule
\[
F(f)=(f_n), \mbox{ where } f_n :=f|_{\kappa_n \times\{ n\}}, \; n\in\NN.
\]
We claim that $F$ is a desired topological isomorphism.

First we note that $F$ is well-defined. Indeed, since $f$ is continuous, there is $m\in\NN$ such that $f(U(m))=\{ 0\}$. So $f_n =0$ for every $n\geq m$, and $F(f)$ belongs to $\bigoplus_{n\in\NN} 2^{\kappa_n}$. Clearly, $F$ is an algebraic isomorphism.

Let $K$ be a compact subset of $M_{\pmb{\kappa}}$. Then, for every $n\in\NN$, the intersection $K_n :=K\cap \big(\kappa_n \times\{ n\}\big)$ is also compact in the discrete space $\kappa_n \times\{ n\}$, so $K_n$ is finite. Hence every compact subset  of $M_{\pmb{\kappa}}$ is contained in a compact subset of $M_{\pmb{\kappa}}$ of the form
\[
K=\left( \bigcup_{n\in\NN} K_n \times \{ n\}\right) \cup \{\infty\},
\]
where $K_n $ is a finite subset of $\kappa_n $ for every  $n\in\NN.$ Since for $0<\epsilon <1$
\[
[K;\epsilon] =\left\{ f\in \CC^0\big( M_{\pmb{\kappa}},2\big): \; f(K)=\{ 0\} \right\},
\]
we see that $F([K;\epsilon]) = \bigoplus_{n\in\NN} 2^{\kappa_n \setminus K_n}$. Taking into account that the sets $\bigoplus_{n\in\NN} 2^{\kappa_n \setminus K_n}$ form an open basis at $0$ in the group $G$, this equality means that $F$ is a homeomorphism. Thus $F$ is a topological isomorphism.

(ii)  follows from (i) and Proposition \ref{p:Ascoli-C(X,2)-Box}(i).

(iii): If $G$ is sequential, then all compact groups $2^{\kappa_n}$ are sequential. So $\kappa_n \leq\w$ for every $n\in\NN$, as a compact abelian group is sequential if and only if it is metrizable.  Conversely, if  $\kappa_n \leq\w$ for every $n\in\NN$, then  $G$ is sequential by Proposition \ref{p:Ascoli-C(X,2)-Box}(ii).

(iv) follows from (i), (iii) and Proposition \ref{p:Ascoli-C(X,2)-Box}(iii).
\end{proof}

We shall use the following sufficient condition on a space $X$ to be a non-Ascoli space.
\begin{proposition}[\cite{GKP}] \label{p:Ascoli-sufficient}
Assume  a Tychonoff space $X$ admits a  family $\U =\{ U_i : i\in I\}$ of open subsets of $X$, a subset $A=\{ a_i : i\in I\} \subset X$ and a point $z\in X$ such that
\begin{enumerate}
\item[{\rm (i)}] $a_i\in U_i$ for every $i\in I$;
\item[{\rm (ii)}] $\big|\{ i\in I: C\cap U_i\not=\emptyset \}\big| <\infty$  for each compact subset $C$ of $X$;
\item[{\rm (iii)}] $z$ is a cluster point of $A$.
\end{enumerate}
Then $X$ is not an Ascoli space.
\end{proposition}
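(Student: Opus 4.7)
The plan is to exhibit a compact subset $\KK \subset \CC(X)$ that fails to be evenly continuous at the pair $(z,\mathbf{0})$, which by the definition of Ascoli space directly witnesses that $X$ is not Ascoli.

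Using complete regularity of $X$, for each $i\in I$ I would choose a continuous function $f_i\colon X\to [0,1]$ with $f_i(a_i)=1$ and $f_i\equiv 0$ on the closed set $X\setminus U_i$; this is possible by (i). Set $\KK := \{f_i : i\in I\}\cup\{\mathbf{0}\}\subset \CC(X)$.

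The first step is to verify that $\KK$ is compact in $\CC(X)$. Given any compact $C\subset X$ and any $\eps\in(0,1)$, the basic neighborhood $[C;\eps]$ of $\mathbf{0}$ contains $f_i$ whenever $f_i|_C\equiv 0$; by (ii) the set $\{i\in I : C\cap U_i\neq\emptyset\}$ is finite, so $f_i|_C\equiv 0$ for cofinitely many $i$. Hence every open neighborhood of $\mathbf{0}$ in $\KK$ misses only finitely many $f_i$, and compactness of $\KK$ follows by a standard one-point-compactification argument: any open cover of $\KK$ contains a set $V_0\ni \mathbf{0}$ absorbing all but finitely many $f_i$, and the remaining ones may be covered by finitely many additional members of the cover.

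The second step is to show $\KK$ is not evenly continuous at $(z,\mathbf{0})$. Set $V:=(-1/2,1/2)$, an open neighborhood of $\mathbf{0}(z)=0$. Suppose for contradiction that there exist a neighborhood $U_\mathbf{0}$ of $\mathbf{0}$ in $\KK$ and a neighborhood $O_z$ of $z$ in $X$ such that $g(O_z)\subset V$ for every $g\in U_\mathbf{0}$. By the first step, $\KK\setminus U_\mathbf{0}$ is finite, so $J:=\{i\in I : f_i\in U_\mathbf{0}\}$ is cofinite in $I$. Since $X$ is Hausdorff and $z$ is a cluster point of $A$ by (iii), $z$ remains a cluster point of the cofinite subset $\{a_i : i\in J\}$, so there exists $i_0\in J$ with $a_{i_0}\in O_z$. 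But then $f_{i_0}(a_{i_0})=1\notin V$, contradicting the inclusion $f_{i_0}(O_z)\subset V$.

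The only real obstacle is the compactness verification, which reduces to unpacking condition (ii) in terms of the basic neighborhoods $[C;\eps]$ of $\mathbf{0}$; the failure of even continuity then essentially reads off the construction of the $f_i$'s together with the cluster-point property (iii).
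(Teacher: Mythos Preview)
The paper does not provide its own proof of this proposition; it is quoted from \cite{GKP} and used as a black box. So there is nothing to compare against, and the relevant question is simply whether your argument is correct. It is. The construction $\KK=\{f_i:i\in I\}\cup\{\mathbf{0}\}$ with $f_i$ supported in $U_i$ and $f_i(a_i)=1$ is exactly the natural way to exploit hypotheses (i)--(iii): condition (ii) guarantees that every basic neighborhood $[C;\eps]$ of $\mathbf{0}$ swallows all but finitely many $f_i$, giving compactness of $\KK$; and condition (iii) together with $T_1$ ensures that every neighborhood of $z$ contains $a_{i_0}$ for some $i_0$ in any cofinite subset of $I$, which forces $f_{i_0}(a_{i_0})=1$ and kills even continuity at $(z,\mathbf{0})$.

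One small remark on exposition: when you write ``$z$ remains a cluster point of the cofinite subset $\{a_i:i\in J\}$'', the word \emph{cofinite} refers to the index set $J\subset I$, and you are implicitly using that in a $T_1$ space a cluster point of $A$ has infinitely many points of $A$ in every neighborhood, so deleting the finitely many $a_i$ with $i\notin J$ cannot destroy the cluster-point property. This is standard, but worth making explicit since the $a_i$ are not assumed distinct.
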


We need also the following construction from \cite{PolSmen}.
Denote by $\mathfrak{s}=\{ 0,1/2,1/3,\dots\}\subset\IR$ the convergent sequence and set
\[
L:=M\cup (\NN\times \mathfrak{s}).
\]
Then $L$ is a zero-dimensional Polish space which is not locally compact at the point $\infty\in M$. For every $p,q\in\NN$, set
\[
a_{p,q} :=(p,p+q)\in M, \quad b_{p,q}:=\big(p,1/(p+q)\big)\in\NN\times \mathfrak{s},\quad c_p:= (p,0)\in\NN\times \mathfrak{s}, 
\]
and define the function
\[
f_{p,q}(a_{p,q})=f_{p,q}(b_{p,q}):=1, \quad \mbox{ and } f_{p,q}(x):=0 \mbox{ if } x\not\in\{ a_{p,q},b_{p,q}\}.
\]
To use Proposition \ref{p:Ascoli-sufficient} we define also the open neighborhood $U_{p,q}$ of $f_{p,q}$ in $\CC(L,2)$ by
\[
U_{p,q} :=\{ h\in\CC(L,2): h(a_{p,q})=h(b_{p,q})=1, \; h(\infty)=h(c_p)=0 \},
\]
and set $A:=\big\{ f_{p,q}: p,q\in\NN\big\}$ and $\U :=\big\{ U_{p,q}: p,q\in\NN\big\}$.
\begin{lemma}\label{l:Ascoli-L}
The set $A$ and the family $\U$  satisfy the following conditions:
\begin{enumerate}
\item[{\rm (i)}] the zero function $\mathbf{0}$ is a unique cluster point of $A$;
\item[{\rm (ii)}] if $K\subset \CC(L,2)$ is compact, the set $\{ (p,q)\in\NN\times\NN : U_{p,q} \cap K \not=\emptyset\}$ is finite.
\end{enumerate}
\end{lemma}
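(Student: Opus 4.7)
The plan is to handle (i) and (ii) by first describing compact subsets of $L$ concretely: since $\NN\times\mathfrak{s}$ is the topological sum of the convergent sequences $\{p\}\times\mathfrak{s}$, any compact $K\subset L$ satisfies $K\cap(\NN\times\mathfrak{s})\subset\bigcup_{p\in F}\{p\}\times\mathfrak{s}$ for some finite $F\subset\NN$, while $K\cap M$ meets each ray $\omega\times\{n\}$ in a finite set.

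For (i), to see that $\mathbf{0}$ is a cluster point of $A$, I take a basic neighborhood $[K;\tfrac12]=\{f:f|_K\equiv 0\}$ and seek $(p,q)$ with $\{a_{p,q},b_{p,q}\}\cap K=\emptyset$. Any $p\notin F$ makes $b_{p,q}\notin K$ for all $q$, and a pigeonhole on rays rules out having $(p,n)\in K$ for every $p\notin F$ and every sufficiently large $n$: otherwise, for any fixed $n>\max F$, the ray $\omega\times\{n\}$ would meet $K$ in infinitely many points. Hence some $p\notin F$ and some $q$ give $a_{p,q}=(p,p+q)\notin K$. For uniqueness, given $g\neq\mathbf{0}$ pick $x_0\in L$ with $g(x_0)=1$. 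If $x_0\in\{\infty\}\cup\{c_p:p\in\NN\}$, the basic neighborhood $\{f:f(x_0)=1\}$ of $g$ contains no $f_{p,q}$ at all, since every $f_{p,q}$ vanishes at $\infty$ and at every $c_p$. If $x_0$ is isolated, the same neighborhood contains at most one $f_{p,q}$ (namely the unique one with $x_0\in\{a_{p,q},b_{p,q}\}$, if any), and one shrinks further by imposing $f(x_1)=g(x_1)$ at a point where $g$ disagrees with that $f_{p,q}$.

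For (ii), I argue by contradiction, assuming $S:=\{(p,q):U_{p,q}\cap K\neq\emptyset\}$ is infinite, and apply the following pigeonhole template. Pick $h_{p,q}\in U_{p,q}\cap K$; for a compact $C\subset L$ to be chosen below, the cover of $K$ by finitely many translates $h^{(i)}+[C;\tfrac12]$ (available from the compactness of $K$) forces, via pigeonhole, a fixed $g\in\CC(L,2)$ and an infinite subfamily of the $h_{p,q}$ with $h_{p,q}|_C=g|_C$. Two subcases arise. If some fixed $p$ admits infinitely many $q$'s in $S$, I take $C=\{p\}\times\mathfrak{s}$; then $g(c_p)=0$ while $g(b_{p,q_n})=1$ for infinitely many $q_n\to\infty$, violating continuity of $g$ at $c_p$ since $b_{p,q}\to c_p$. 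Otherwise $\{p:\exists q,(p,q)\in S\}$ is infinite and, after passing to a subsequence with $p_n\to\infty$, I take $C=\{a_{p_n,q_n}:n\in\NN\}\cup\{\infty\}$, which is compact in $M$ because each ray meets it in finitely many points; then $g(\infty)=0$ together with $g(a_{p_n,q_n})=1$ for infinitely many $n$ with $p_n+q_n\to\infty$ contradicts $g\equiv 0$ on some basic neighborhood $U(m)$ of $\infty$.

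The main obstacle is choosing the right compact set $C$ in each subcase of (ii), and in particular ensuring compactness of $C$ in the second subcase. This forces the preliminary extraction of a subsequence with $p_n\to\infty$, which simultaneously makes each ray meet $C$ finitely often and makes $p_n+q_n\to\infty$, so that continuity of $g$ at $\infty$ produces the required contradiction. Once the compacta are identified, the rest is a routine application of the finite cover of $K$ by translates of $[C;\tfrac12]$ together with continuity of $g$ at the relevant non-isolated point.
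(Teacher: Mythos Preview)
Your argument for (ii) is correct and is actually a bit more elementary than the paper's: the paper first invokes that $\CC(L,2)$ is a $\sigma$-space to get that $K$ is metrizable, extracts a convergent sequence $h_{p_i,q_i}\to h$ in $K$, and then derives a contradiction from equicontinuity of $\{h_{p_i,q_i}\}\cup\{h\}$ on a suitable compact subset of $L$; your finite-cover-by-translates-of-$[C;\tfrac12]$ plus pigeonhole avoids the metrizability step entirely. Your uniqueness argument in (i) is also fine and essentially the same as the paper's.

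The gap is in your proof that $\mathbf{0}$ is a cluster point of $A$. The negation of ``some $p\notin F$ and some $q$ give $a_{p,q}\notin K$'' is ``for every $p\notin F$ and every $n>p$ one has $(p,n)\in K$''; here the threshold $n>p$ depends on $p$, so for a \emph{fixed} level $n$ this only forces $(p,n)\in K$ for the finitely many $p\notin F$ with $p<n$, and your pigeonhole on the ray $\omega\times\{n\}$ yields no contradiction. Worse, under the paper's literal definition of $M$ (second coordinate is the level, so $(p,m)\to\infty$ iff $m\to\infty$), the set
\[
C=\{(p,m)\in\omega\times\NN:1\le p<m\}\cup\{\infty\}
\]
is compact (each level $\omega\times\{m\}$ meets it in exactly $m-1$ points) and contains every $a_{p,q}=(p,p+q)$, so $[C;\tfrac12]$ is a neighbourhood of $\mathbf{0}$ disjoint from $A$: the assertion $\mathbf{0}\in\overline{A}$ is simply false under that reading. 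The paper's own proof describes compact subsets of $M$ as lying in sets $\{(p,s):s\le t(p)\}\cup\{\infty\}$, which is incompatible with the stated topology but correct if one swaps the roles of the two coordinates (equivalently, takes the \emph{first} coordinate as the level). Under that intended convention compact sets meet each set $\{p\}\times\NN$ finitely, and then for any fixed $p\notin F$ one immediately finds infinitely many $q$ with $a_{p,q}\notin K$; you should adopt this correction rather than try to salvage the pigeonhole.
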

\begin{proof}
(i) It is easy to see that any compact subset $C$ of $L$ is contained in a compact subset of $L$ of the form
\[
C_{t,n}=\big(\left\{ (p,s)\in M: s\leq t(p), p\in\NN\right\} \cup\{ \infty\}\big) \cup \bigcup_{i\leq n} \big(\{ i\}\times \mathfrak{s}\big),
\]
where $n\in\NN$ and $t:\NN \to \NN$ is a function. If we take $q>t(n+1)$, then $f_{n+1,q}\in [C;\epsilon]\cap A$ for every $\epsilon>0$. Thus $\mathbf{0}\in \overline{A}$.

Denote by $\supp(g)=\{ x\in L: g(x)\not= 0\}$ the support of a function $g\in\CC(L,2)$.
Note that the supports of the functions $f_{p,q}\in A$ are pairwise disjoint. So $f_{p,q}$ is an isolated point of $A$. Now if $g$ is a cluster point of $A$ and $g\not\in A$, then $g$ is in the closure of $A$ in the topology $\tau_p$ of pointwise convergence (note that $\tau_p$ is metrizable since $L$ is countable). Thus $g$ must be the zero function $\mathbf{0}$.

(ii) Suppose for a contradiction that the set $J:=\{ (p,q)\in\NN\times\NN : U_{p,q} \cap K \not=\emptyset\}$ is infinite for some compact subset $K\subset \CC(L,2)$. Note that the space $\CC(L,2)$ is a $\sigma$-space and hence $K$ is metrizable (see \cite{gruenhage}). So there is a sequence $F=\big\{ h_{p_i,q_i}\big\}_{i\in\NN} \subset K$ converging to a function $h\in K$, where $h_{p_i,q_i}\in U_{p_i,q_i}$ and all pairs $(p_i,q_i)$ are distinct. In particular, $h(\infty)=0$.

We claim that the sequence $(p_i)$ is bounded. Indeed, assuming the converse we would find $p_{i_1}<p_{i_2}<\dots$ such that the points $a_{p_{i_s},q_{i_s}}$ converge to $\infty$. But since $h_{p_{i_s},q_{i_s}}\big( a_{p_{i_s},q_{i_s}}\big) =1$ and $h_{p_{i_s},q_{i_s}}(\infty)=0$ for every $s\in\NN$, this contradicts the equicontinuity of the compact set $F\cup\{ h\}$ on the compact set $\big\{ a_{p_{i_s},q_{i_s}}: s\in\NN\big\} \cup\{\infty\}$.

So without loss of generality we can assume that $p_i = n$ for every $i\in\NN$. Set $C:= \{ b_{n,q}, c_n: q\in\NN\}$ and $\epsilon=1/2$. Since $h_{n,q_i}(b_{n,q_i})=1$ and $h_{n,q_i}(c_n)=0$ for every $i\in\NN$, we obtain that $h(c_n)=0$ and the compact set $F\cup\{ h\}$ is not equicontinuous on the compact set $C$. This contradiction shows that the set $J$ is finite.
\end{proof}
Lemma \ref{l:Ascoli-L}   and  Proposition \ref{p:Ascoli-sufficient} immediately imply
\begin{proposition}[\cite{Pol-2015}] \label{p:Ascoli-L-Pol}
The space $\CC(L,2)$ is not Ascoli.
\end{proposition}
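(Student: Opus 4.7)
The plan is to apply Proposition \ref{p:Ascoli-sufficient} directly to the space $X=\CC(L,2)$, using the family $\U=\{U_{p,q}: p,q\in\NN\}$ of open subsets, the set $A=\{f_{p,q}:p,q\in\NN\}$, and the distinguished point $z=\mathbf{0}\in\CC(L,2)$. Since Lemma \ref{l:Ascoli-L} already provides two of the three hypotheses, the proof reduces to verifying the remaining membership condition and then invoking the proposition.

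First I would check condition (i) of Proposition \ref{p:Ascoli-sufficient}, namely that $f_{p,q}\in U_{p,q}$ for every $p,q\in\NN$. From the definitions, $f_{p,q}(a_{p,q})=f_{p,q}(b_{p,q})=1$. Since $\infty\notin\{a_{p,q},b_{p,q}\}$ (as $\infty\notin M$ in the canonical sense needed, and clearly $\infty\ne a_{p,q},b_{p,q}$) and $c_p=(p,0)\notin\{a_{p,q},b_{p,q}\}$ (because $b_{p,q}=(p,1/(p+q))$ has nonzero second coordinate and $a_{p,q}\in M$), we get $f_{p,q}(\infty)=f_{p,q}(c_p)=0$, so indeed $f_{p,q}\in U_{p,q}$.

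Next, condition (ii) of Proposition \ref{p:Ascoli-sufficient} — that every compact subset of $\CC(L,2)$ meets only finitely many of the $U_{p,q}$ — is exactly the content of Lemma \ref{l:Ascoli-L}(ii). Likewise, condition (iii) — that $\mathbf{0}$ is a cluster point of $A$ — is the first assertion of Lemma \ref{l:Ascoli-L}(i). All three hypotheses being satisfied, Proposition \ref{p:Ascoli-sufficient} yields that $\CC(L,2)$ is not an Ascoli space.

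There is essentially no obstacle here: the entire technical work has already been done inside Lemma \ref{l:Ascoli-L} (the equicontinuity arguments at $\infty$ and at the points $c_p$, plus the $\sigma$-space/metrizability observation needed to pass to sequences). The final proposition is therefore a clean packaging step, and the only thing one must be careful about is correctly identifying the three ingredients $A$, $\U$, and $z$ and citing the right clauses of Lemma \ref{l:Ascoli-L} and Proposition \ref{p:Ascoli-sufficient}.
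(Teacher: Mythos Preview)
Your proof is correct and follows exactly the paper's approach: the paper states that the proposition follows immediately from Lemma \ref{l:Ascoli-L} and Proposition \ref{p:Ascoli-sufficient}, and you have simply spelled out the easy verification of condition (i) that the paper leaves implicit.
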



\section{Proofs of Theorems \ref{t:Ascoli-C(X,2)-A}--\ref{t:Ascoli-C(X,2)-Tight}}


We start from the following proposition proved by R.~Pol.
\begin{proposition}[\cite{Pol-2015}] \label{p:Ascoli-C(X,2)-Pol}
Let $X$ be  a zero-dimensional metric space. If the space $\CC(X,2)$ is  Ascoli, then either $X$ is locally compact or $X$ is not locally compact but the derived set $X'$ is compact.
\end{proposition}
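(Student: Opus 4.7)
The plan is to prove the contrapositive: if $X$ is not locally compact and $X'$ is not compact, then $\CC(X,2)$ fails to be Ascoli via Proposition \ref{p:Ascoli-sufficient}. The construction closely parallels the one used for the space $L$ in Lemma \ref{l:Ascoli-L}, with the ambient $X$ replacing $L$.

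Since isolated points trivially have compact neighborhoods, non-local-compactness supplies $x_\infty \in X'$ with no compact neighborhood. Fix a decreasing base $(V_n)_{n\in\NN}$ of clopen neighborhoods of $x_\infty$ with $\bigcap_n V_n = \{x_\infty\}$. Passing to a subsequence, arrange each $V_n \setminus V_{n+1}$ to be non-compact: otherwise at some $n$, $V_n \setminus V_m$ is compact for all $m > n$, whence any sequence in $V_n$ with no cluster must lie eventually in each $V_m$ and converge to $x_\infty$, contradicting non-compactness of $V_n$. Fix an infinite closed discrete $\{a_{p,q}: q \in \NN\}$ in $V_p \setminus V_{p+1}$ for each $p$. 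Since $X'$ is closed in $X$ and non-compact, pick distinct $c_p \in X'$ with no cluster in $X$; after discarding finitely many terms and re-indexing the $V_n$, assume $c_p \notin V_1$ for all $p$. For each $p$, pick distinct $b_{p,q} \to c_p$ in $X \setminus (V_1 \cup \{c_{p'} : p' \neq p\})$. By zero-dimensionality, shrink to pairwise disjoint clopen sets $A_{p,q} \ni a_{p,q}$ in $V_p \setminus V_{p+1}$ and $B_{p,q} \ni b_{p,q}$ in $X \setminus V_1$, with $\{A_{p,q}\}_q$ discrete in $X$ for each fixed $p$ and $\{B_{p,q}\}_q$ accumulating only at $c_p$; shrink further so that the countable set $Y := \{x_\infty\} \cup \{c_p\} \cup \{a_{p,q}\} \cup \{b_{p,q}\}$ is closed in $X$.

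Set $f_{p,q} := \chi_{A_{p,q} \cup B_{p,q}} \in \CC(X,2)$ and
$$U_{p,q} := \{h \in \CC(X,2) : h(a_{p,q}) = h(b_{p,q}) = 1,\ h(x_\infty) = h(c_p) = 0\}.$$
With $A := \{f_{p,q}\}$, $\U := \{U_{p,q}\}$, and $z := \mathbf{0}$, condition (i) of Proposition \ref{p:Ascoli-sufficient} is immediate. For (iii), given compact $C \subset X$, the set $C$ meets $\{c_p\}$ only finitely often (closed discrete), and $C$ contains no neighborhood of $x_\infty$ (else $x_\infty$ would be locally compact); combined with the local finiteness of each family $\{A_{p,q}\}_q$ and $\{B_{p,q}\}_q$, we select $(p,q)$ with $c_p \notin C$ and $C \cap (A_{p,q} \cup B_{p,q}) = \emptyset$, giving $f_{p,q} \in [C; 1/2]$.

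The main step is (ii), and I would handle it by reducing to the countable closed subspace $Y$. The restriction map $r \colon \CC(X,2) \to \CC(Y,2)$ is continuous, and since $Y$ is countable metric, $\CC(Y,2)$ is a $\sigma$-space whose compact subsets are metrizable by the Gruenhage result cited in Lemma \ref{l:Ascoli-L}. If a compact $\KK \subset \CC(X,2)$ met $U_{p_i,q_i}$ for an infinite family of distinct pairs, picking $h_i \in U_{p_i,q_i} \cap \KK$ and extracting a subsequence $g_{i_s} := h_{i_s}|_Y \to g$ in the compact metrizable $r(\KK) \subset \CC(Y,2)$, the proof of Lemma \ref{l:Ascoli-L}(ii) transfers directly: if $(p_i)$ is unbounded, a subsequence $p_{i_s} \to \infty$ produces the compact set $\{a_{p_{i_s},q_{i_s}}\} \cup \{x_\infty\} \subset Y$ where uniform convergence of $g_{i_s}$ together with $g_{i_s}(a_{p_{i_s},q_{i_s}}) = 1$ and $g_{i_s}(x_\infty) = 0$ contradicts continuity of $g$; if $(p_i)$ is bounded, reducing to $p_i \equiv n$ and using $b_{n,q_i} \to c_n$ gives the analogous contradiction on $\{b_{n,q_i}\} \cup \{c_n\}$. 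The main obstacle is the careful coordination in the construction to make $Y$ closed in $X$ (so $r$ is continuous and the relevant sets $\{a_{p_{i_s},q_{i_s}}\} \cup \{x_\infty\}$, $\{b_{n,q_i}\} \cup \{c_n\}$ remain compact in $X$ as well) while preserving all the accumulation structure needed to run the Lemma \ref{l:Ascoli-L}(ii) argument on $Y$.
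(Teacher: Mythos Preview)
Your argument is correct and takes a genuinely different route from the paper's.  The paper argues abstractly: it quotes van Douwen's lemma to find a closed copy of the fan $M$ inside $X$, observes that non-compactness of $X'$ yields a closed copy of $\NN\times\mathfrak{s}$, and combines these into a closed copy $Y$ of $L$.  It then invokes Engelking's theorem that such a $Y$ is a retract of $X$ and Borges' Dugundji extension property for stratifiable spaces to conclude that $\CC(Y,2)\cong\CC(L,2)$ is a retract of $\CC(X,2)$; since $\CC(L,2)$ is not Ascoli (Proposition~\ref{p:Ascoli-L-Pol}) and the Ascoli property passes to retracts, $\CC(X,2)$ is not Ascoli.

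You instead rebuild the configuration of Lemma~\ref{l:Ascoli-L} directly inside $X$ and verify Proposition~\ref{p:Ascoli-sufficient} for $\CC(X,2)$ by hand, using only the continuous restriction map $r\colon \CC(X,2)\to\CC(Y,2)$ to transfer the key finiteness check (ii) to the countable space $Y$ where compact sets are metrizable.  This is more elementary: it avoids van Douwen's embedding lemma, the retraction theorem, the Dugundji extension property, and the retract-stability of the Ascoli property.  The price is the bookkeeping you flag at the end --- arranging the clopen sets $A_{p,q},B_{p,q}$ and the neighborhoods $W_p\ni c_p$ so that $Y$ is closed and the families are suitably discrete away from $x_\infty$ and the $c_p$ --- but all of this is routine in a zero-dimensional metric space.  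One small remark: in your verification of (iii) the observation that $C$ contains no $V_n$ is not actually needed; for any $p$ with $c_p\notin C$ the discreteness of $\{A_{p,q}\}_q$ and the local finiteness of $\{B_{p,q}\}_q$ away from $c_p$ already give infinitely many admissible $q$.
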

\begin{proof}
Suppose for a contradiction that $X$ is not locally compact and $X'$ is not compact.  By Lemma 8.3 of \cite{Douwen}, $X$ contains as a closed subspace the countable metric fan $M$. Since $X'$ is not compact, $X$ contains also an isomorphic copy of $\NN \times \mathfrak{s}$. Thus $X$ contains a closed subspace $Y$ which is homeomorphic to the space $L$. As $Y$ is a retract of $X$ (see \cite{Engel-1969}  for a more general assertion) and $X$ has the Dugundji extension property by \cite{borges},  $\CC(Y,2)$ is a retract of $\CC(X,2)$. So $\CC(X,2)$ is not Ascoli by Proposition \ref{p:Ascoli-L-Pol} and \cite[Proposition 5.2]{BG}.
\end{proof}
Below and in the proof of Theorem \ref{t:Ascoli-C(X,2)-seq} we shall use the following fact:  a separable metrizable space $X$ such that $X'$ is compact is Polish (this easily follows from Cantor's theorem \cite[4.3.8]{Eng}).
The next result gives a partial answer to Problem 6.8 in \cite{BG}.
\begin{corollary}[\cite{Pol-2015}] \label{c:Ascoli-C(X,2)-Pol}
Let $X$ be  a zero-dimensional separable metric space. Then the space $\CC(X,2)$ is  Ascoli if and only if either $X$ is locally compact  or $X$ is not locally compact but the derived set $X'$ is compact.
\end{corollary}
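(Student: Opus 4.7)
The necessity is a direct application of Proposition \ref{p:Ascoli-C(X,2)-Pol}, which requires only zero-dimensionality and metrizability of $X$ (separability is not used). So I focus on sufficiency and split it into the two cases of the conclusion. If $X$ is locally compact, then being separable metric it is hemicompact: a countable base whose members have compact closure gives, after finite unions, a cofinal sequence $K_1\subseteq K_2\subseteq\cdots$ of compact subsets. The compact-open topology on $\CC(X)$ is then generated by the countable seminorm family $\{\|\cdot\|_{K_n}\}_{n\in\NN}$, so $\CC(X)$ is metrizable, and hence so is its closed subspace $\CC(X,2)$. Every metrizable space is a $k$-space, hence Ascoli.

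Suppose instead that $X$ is not locally compact but $X'$ is compact; then $X$ is Polish, and I aim to prove the stronger statement that $\CC(X,2)$ is a $k_\w$-space by imitating the proof of Proposition \ref{p:Ascoli-M-k}(i). Using zero-dimensionality of $X$ and compactness of $X'$, choose a decreasing sequence $V_1\supseteq V_2\supseteq\cdots$ of clopen neighborhoods of $X'$ with $V_n\subseteq\{x\in X:d(x,X')<1/n\}$ and $\bigcap_n V_n=X'$. Set $D_0:=X\setminus V_1$ and $D_n:=V_n\setminus V_{n+1}$ for $n\geq 1$; each $D_n$ is clopen and disjoint from $X'$, hence an at most countable discrete set of isolated points of $X$. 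The subgroup $\CC^0(X,2):=\{f\in\CC(X,2):f|_{X'}=0\}$ coincides with the basic neighborhood $[X';1/2]$ of the zero function (using $X'$ compact), and any $f\in\CC^0(X,2)$ vanishes on some $V_N$ (by $X'$ compact and the local constancy of $f$), so $\supp(f)\subseteq D_0\cup\cdots\cup D_{N-1}$. Thus $f\mapsto (f|_{D_n})_n$ is an algebraic isomorphism $\CC^0(X,2)\to \bigoplus_n 2^{D_n}$.

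Exactly as in Proposition \ref{p:Ascoli-M-k}(i), this algebraic isomorphism is a homeomorphism onto the box topology, so Proposition \ref{p:Ascoli-C(X,2)-Box}(i) gives that $\CC^0(X,2)$ is $k_\w$. The restriction map $r\colon\CC(X,2)\to\CC(X',2)$, $f\mapsto f|_{X'}$, is surjective (every clopen $A\subseteq X'$ extends to a clopen $\widetilde A\subseteq X$, obtained by separating the disjoint compact sets $A$ and $X'\setminus A$ by disjoint clopen neighborhoods in $X$) with kernel $\CC^0(X,2)$, so $\CC(X,2)$ is a countable disjoint union of clopen translates of $\CC^0(X,2)$ (countable because $\CC(X',2)$ is countable). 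A countable disjoint union of $k_\w$-spaces remains $k_\w$, so $\CC(X,2)$ is $k_\w$ and in particular Ascoli. The principal obstacle is the matching of topologies in the box-topology identification: it requires both that every compact $K\subseteq X$ meets each $D_n$ in a finite set (a closed discrete subset of compact $K$) and that any choice of finite subsets $F_n\subseteq D_n$ yields a compact subset $X'\cup\bigcup_n F_n$ of $X$, the latter crucially using the shrinking condition $V_n\subseteq\{x:d(x,X')<1/n\}$.
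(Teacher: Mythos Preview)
Your proof is correct. The necessity and the locally compact branch of sufficiency match the paper's argument (the paper cites Corollary~\ref{c:Pol-k-space} for the metrizability of $\CC(X)$, which is exactly your hemicompactness reasoning). For the branch where $X$ is not locally compact but $X'$ is compact, however, the paper's proof of this corollary is a one-line citation of Theorem~\ref{t:GTZ-sequen} (Gruenhage--Tsaban--Zdomskyy), which gives sequentiality of $\CC(X,2)$ once one observes that such an $X$ is Polish.

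Your route is genuinely different here: you bypass the external GTZ result and instead establish directly that $\CC^0(X,2)\cong\bigoplus_n 2^{D_n}$ with the box topology, hence is $k_\omega$. This is precisely the argument the paper carries out later in Theorem~\ref{t:Ascoli-C(X,2)-Ascoli}, case~(3b), so you have in effect anticipated that step rather than invoking a black box. The payoff is self-containment and a sharper conclusion ($k_\omega$ rather than merely Ascoli); the cost is a substantially longer argument where the paper prefers a two-line citation at this stage. Two minor remarks: you assert that $X$ is Polish but never use it---your argument runs on compactness of $X'$ alone (countability of $\CC(X',2)$ follows from $X'$ being compact metric, and countability of the $D_n$ is irrelevant to the $k_\omega$ conclusion); and your compactness check for $X'\cup\bigcup_n F_n$ via the shrinking condition $V_n\subseteq\{x:d(x,X')<1/n\}$ is exactly the point the paper's later proof handles by identifying compact subsets of $M_{\pmb{\kappa}}$.
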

\begin{proof}
The necessity follows from Proposition \ref{p:Ascoli-C(X,2)-Pol}, and the sufficiency follows from Corollary \ref{c:Pol-k-space} and Theorem \ref{t:GTZ-sequen}.
\end{proof}

In the next theorem we strengthen Theorem \ref{t:Ascoli-C(X,2)-A}(i) by showing also the algebraic structure of the group  $\CC(X,2)$ which is essentially used in the proof of Theorem \ref{t:Ascoli-C(X,2)-k-space} below.
\begin{theorem} \label{t:Ascoli-C(X,2)-Ascoli}
For   a zero-dimensional metric space $X$, the following assertions are equivalent:
\begin{enumerate}
\item[{\rm (1)}] the space $\CC(X,2)$ is  Ascoli;
\item[{\rm (2)}] the space  $\CC(X,2)$ is a $k_\IR$-space;
\item[{\rm (3)}] one of the following conditions holds
\begin{enumerate}
\item[{\rm (3a)}] $X$ is locally compact; in this case $\CC(X,2)$ is the product of a family of Polish abelian groups;
\item[{\rm (3b)}] $X$ is not locally compact but the derived set $X'$ is compact; in this case the open subgroup
    \[
    H:= \{ f\in \CC(X,2): \; f|_{X'} \equiv 0\}
    \]
    of $\CC(X,2)$ is a $k$-space and is topologically isomorphic to the direct sum $\bigoplus_{n\in\NN} 2^{\kappa_n}$ endowed with the box topology for some sequence $\pmb{\kappa} =(\kappa_n)_{n\in\NN}$ of cardinal numbers.
\end{enumerate}
\end{enumerate}
\end{theorem}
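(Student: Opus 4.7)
The plan is to establish the cycle of implications $(2)\Rightarrow(1)\Rightarrow(3)\Rightarrow(2)$ together with explicit verification of the structural descriptions appearing in (3a) and (3b). The implication $(2)\Rightarrow(1)$ is immediate from the theorem of Noble recalled in Section \ref{seq:Ascoli-C(X,2)-Pre}. For $(1)\Rightarrow(3)$, Proposition \ref{p:Ascoli-C(X,2)-Pol} supplies the required dichotomy, so the work reduces to proving the structural statements in the two cases, and then closing the cycle.

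For case (3a), I would exploit that a locally compact metric space decomposes as a topological sum $X=\bigsqcup_{\alpha\in A}X_\alpha$ of clopen $\sigma$-compact subspaces. Each $X_\alpha$ is then a Polish zero-dimensional locally compact space, so Corollary \ref{c:Pol-k-space} yields that $\CC(X_\alpha)$ is Polish; consequently the closed subgroup $\CC(X_\alpha,2)$ is a Polish abelian group. The decomposition induces the canonical topological group isomorphism $\CC(X,2)\cong\prod_{\alpha\in A}\CC(X_\alpha,2)$, which is precisely the assertion of (3a).

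For case (3b), zero-dimensionality of $X$ together with the compactness of $X'$ yields a decreasing fundamental system $\{V_n\}_{n\in\NN}$ of clopen neighborhoods of $X'$ with $\bigcap_n V_n=X'$. I set $D_0:=X\setminus V_1$ and $D_n:=V_n\setminus V_{n+1}$ for $n\geq 1$; each $D_n$ is a clopen subspace disjoint from $X'$, hence consists solely of isolated points of $X$ and is thus a discrete subspace. Put $\kappa_n:=|D_n|$ and define $F\colon H\to\bigoplus_{n\in\NN}2^{D_n}$ by $F(f):=(f|_{D_n})_{n\in\NN}$. An $f\in H$ vanishes on some clopen neighborhood of the compact set $X'$ (as $f^{-1}(0)$ is clopen and contains $X'$), which must contain some $V_n$, so $\supp(f)\subseteq D_0\cup\cdots\cup D_{n-1}$ and $F$ is a well-defined algebraic isomorphism onto the direct sum. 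To identify the topologies I would verify: (a) every compact $K\subseteq X$ meets each $D_n$ in a finite set, since $K\cap(X\setminus V_{n+1})$ is a compact subset of the discrete subspace $X\setminus V_{n+1}$; and conversely (b) for any choice of finite $K_n\subseteq D_n$, the set $X'\cup\bigsqcup_n K_n$ is compact, because any sequence drawn from it either has a subsequence in $X'$ (handled by compactness of $X'$) or hits infinitely many of the $D_n$ and therefore accumulates on $X'$, using that $D_n\subseteq V_n$ and $\{V_n\}$ is a fundamental system. Via $F$, the basic compact-open neighborhoods $\{f\in H:f|_K\equiv\mathbf{0}\}$ correspond precisely to the basic box neighborhoods $\{(g_n):g_n|_{K\cap D_n}\equiv 0\text{ for all }n\}$, proving $F$ is a topological isomorphism. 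Proposition \ref{p:Ascoli-C(X,2)-Box}(i) then shows $H$ is a $k_\w$-space, and in particular a $k$-space.

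For $(3)\Rightarrow(2)$: in case (3a), $\CC(X,2)$ is a product of Polish (hence metrizable) groups, and by Noble's theorem any product of first-countable spaces is a $k_\IR$-space. In case (3b), $H$ is an open (hence clopen) subgroup of $\CC(X,2)$, so $\CC(X,2)$ decomposes as the topological sum $\bigsqcup_{g+H\in\CC(X,2)/H}(g+H)$ of its clopen cosets, each homeomorphic to the $k$-space $H$; a topological sum of $k$-spaces is itself a $k$-space, so $\CC(X,2)$ is even a $k$-space, and a fortiori a $k_\IR$-space. The principal technical obstacle lies in the structural identification in case (3b), specifically in matching the compact-open topology on $H$ with the box topology on $\bigoplus_n 2^{\kappa_n}$ in the possibly non-separable setting, where the $\kappa_n$ may be arbitrary infinite cardinals; once this is in hand, the remaining implications follow formally from previously established tools.
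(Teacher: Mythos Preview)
Your proof is correct and follows the same overall cycle $(2)\Rightarrow(1)\Rightarrow(3)\Rightarrow(2)$ as the paper, with the same handling of case (3a) via the clopen $\sigma$-compact decomposition and Corollary~\ref{c:Pol-k-space}, and the same appeal to Noble's product theorem for $(3)\Rightarrow(2)$.

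The one notable difference is in case (3b). The paper does not build the isomorphism $H\cong\bigoplus_n 2^{\kappa_n}$ directly; instead it collapses $X'$ to a point by a continuous map $T\colon X\to M_{\pmb{\kappa}}$ onto the generalized metric fan, checks that $T$ is proper (preimages of compacta are compact), and then invokes Proposition~\ref{p:Ascoli-M-k}(i), which already identified $\CC^0(M_{\pmb{\kappa}},2)$ with the box-topology direct sum. Your argument works directly with a clopen outer base $\{V_n\}$ of $X'$ and the annuli $D_n=V_n\setminus V_{n+1}$, proving the topology match by hand via the two-way description of compacta in $X$. In effect you are inlining the proof of Proposition~\ref{p:Ascoli-M-k}(i) into the theorem; this is slightly more self-contained, while the paper's route modularizes the computation by first treating the model space $M_{\pmb{\kappa}}$. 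Your compactness verification of $X'\cup\bigsqcup_n K_n$ should also mention the easy third case where the sequence eventually lies in a single finite union $K_0\cup\cdots\cup K_N$ (and hence has a constant subsequence), but this is a trivial omission.
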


\begin{proof}
(1)$\Rightarrow$(3): Proposition \ref{p:Ascoli-C(X,2)-Pol} implies that  either $X$ is locally compact or $X$ is not locally compact but the derived set $X'$ is compact.

 Assume that $X$ is locally compact. Then $X$ is a disjoint union of a family $\{ X_i\}_{i\in I}$ of separable metrizable locally compact spaces by \cite[5.1.27]{Eng}. So
\[
\CC(X,2) = \prod_{i\in I} \CC(X_i,2),
\]
where each space $\CC(X_i,2)$ is Polish by Corollary \ref{c:Pol-k-space}. 

Assume that $X$ is not locally compact but the derived set $X'$ is compact. As in the first paragraph of the proof of Theorem 3.1 in \cite{GTZ}, there is a clopen outer base $\{ U_n\}_{n\in\NN}$ of $X'$ such that $U_1 =X$, $U_{n+1} \subset U_n$, and $U_n\setminus U_{n+1}$ is infinite for all $n\in\NN$. Noting that $U_n\setminus U_{n+1}$  is a clopen discrete subspace of $X$, we set $\pmb{\kappa}=(\kappa_n)$, where $\kappa_n =|U_n\setminus U_{n+1}|$ for every $n\in\NN$. Let $T:X\to M_{\pmb{\kappa}}$ be a map such that $T(X')=\{\infty\}$ and $T|_{U_n\setminus U_{n+1}}$ is an injective map onto $\kappa_n\times \{n\}$.  Then $T$ is a continuous map such that $T^{-1}(K)$ is compact for every compact subset $K\subset M_{\pmb{\kappa}}$ (see the structure of compact subsets of $M_{\pmb{\kappa}}$ given in the proof of Proposition \ref{p:Ascoli-M-k}). The map
\[
T^\ast: \CC^0(M_{\pmb{\kappa}},2) \to H, \quad T^\ast(f):=f\circ T,
\]
is easily seen to be a continuous isomorphism of abelian groups. As $T^\ast\big( [K;\varepsilon]\big) =[T^{-1}(K); \varepsilon]$ for every compact subset $K\subset M_{\pmb{\kappa}}$ and $0<\varepsilon<1$, $T^\ast$ is open and hence it is a topological isomorphism. Thus, by Proposition \ref{p:Ascoli-M-k}, $H$ is a $k$-space which is topologically isomorphic to the direct sum $\bigoplus_{n\in\NN} 2^{\kappa_n}$ endowed with the box topology. Therefore also $ \CC(X,2)$ is a $k$-space.

(3)$\Rightarrow$(2): If $X$ is locally compact, then $\CC(X,2) = \prod_{i\in I} \CC(X_i,2)$ is a $k_\IR$-space by  \cite[Theorem 5.6]{Nob}. The second case is immediate.

(2)$\Rightarrow$(1) follows from \cite{Noble}.
\end{proof}


\begin{theorem} \label{t:Ascoli-C(X,2)-k-space}
Let $X$ be a zero-dimensional metric space. Then the space $\CC(X,2)$ is a $k$-space if and only if one of the following conditions holds
\begin{enumerate}
\item[{\rm (a)}] $X$ is a topological sum of a separable metrizable locally compact space $L$ and a discrete space $D$; in this case $\CC(X,2)$ is the product of a  Polish abelian group and a compact abelian group;
\item[{\rm (b)}] $X$ is not locally compact but the derived set $X'$ is compact, in this case $\CC(X,2)$ contains an open subgroup which is topologically isomorphic to the direct sum $\bigoplus_{n\in\NN} 2^{\kappa_n}$ endowed with the box topology for some sequence $\pmb{\kappa} =(\kappa_n)_{n\in\NN}$ of cardinal numbers.
\end{enumerate}
\end{theorem}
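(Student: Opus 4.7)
The plan is to handle necessity and sufficiency separately, using Theorem \ref{t:Ascoli-C(X,2)-Ascoli} as the springboard.

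For necessity, since every $k$-space is Ascoli, Theorem \ref{t:Ascoli-C(X,2)-Ascoli} forces one of the alternatives (3a) $X$ is locally compact, or (3b) $X$ is not locally compact but $X'$ is compact. Case (3b) supplies (b) directly, so the task is to strengthen (3a) to (a). I would write $X=\bigsqcup_{i\in I}X_i$ as the topological sum of its separable clopen summands (as in the proof of Theorem \ref{t:Ascoli-C(X,2)-Ascoli}), so that $\CC(X,2)=\prod_{i\in I}\CC(X_i,2)$, and partition $I=I_1\sqcup I_2$ where $I_1$ collects the indices with $X_i$ discrete (whence $\CC(X_i,2)=2^{X_i}$ is compact) and $I_2$ collects the rest. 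For $i\in I_2$, the factor $\CC(X_i,2)$ is a non-compact Polish group: if $X_i$ is infinite compact metric then $\CC(X_i,2)$ is countably infinite discrete, and if $X_i$ is not compact I would produce an unbounded closed discrete family of characteristic functions supported on pairwise disjoint clopen pieces. The crucial step is then the bound $|I_2|\leq\w$, obtained from the classical theorem that a product of Hausdorff spaces can be a $k$-space only if all but countably many factors are countably compact; since Polish non-compact spaces are not countably compact, this forces $I_2$ to be countable. Setting $L:=\bigsqcup_{i\in I_2}X_i$ (a countable disjoint sum of separable metrizable locally compact spaces, hence itself separable metrizable locally compact) and $D:=\bigsqcup_{i\in I_1}X_i$ (discrete) yields the decomposition in (a), with $\CC(X,2)=\CC(L,2)\times 2^D$.

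For sufficiency, in case (a), $\CC(L,2)$ is a Polish abelian group by Corollary \ref{c:Pol-k-space} and $2^D$ is a compact abelian group, so $\CC(X,2)=\CC(L,2)\times 2^D$ is the product of a $k$-space with a locally compact space and therefore a $k$-space. In case (b), Theorem \ref{t:Ascoli-C(X,2)-Ascoli} hands us an open subgroup $H\leq\CC(X,2)$ topologically isomorphic to $\bigoplus_{n\in\NN}2^{\kappa_n}$ with the box topology; by Proposition \ref{p:Ascoli-C(X,2)-Box}(i), $H$ is a $k_\w$-space, and since a Hausdorff topological group having an open $k$-subgroup is a topological disjoint sum of cosets each homeomorphic to that subgroup, $\CC(X,2)$ inherits the $k$-space property.

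The delicate point in the whole argument is the bound $|I_2|\leq\w$. I would either invoke the classical product/$k$-space theorem of Noble, or, if a self-contained route is preferred, argue by hand in this specific setting: use the closed discrete families of disjointly supported characteristic functions built above to embed in $\prod_{i\in I_2}\CC(X_i,2)$ a ``fan'' configuration analogous to the one in Lemma \ref{l:Ascoli-L}, and then apply the Ascoli-style obstruction of Proposition \ref{p:Ascoli-sufficient} to contradict the $k$-space hypothesis whenever $I_2$ is uncountable.
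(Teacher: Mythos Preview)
Your proof is correct and follows the same overall architecture as the paper's: reduce via Theorem~\ref{t:Ascoli-C(X,2)-Ascoli}, and in the locally compact case decompose $X$ into separable clopen pieces $X_i$ and show that at most countably many can be non-discrete. The only substantive difference is in how you justify $|I_2|\leq\omega$. The paper argues more directly and uniformly: each non-discrete $X_i$, being locally compact zero-dimensional metric, contains an infinite compact clopen set $K_i$, so $\CC(K_i,2)\cong\NN$ embeds as a closed subspace of $\CC(X_i,2)$; an uncountable $I_2$ would then place a closed copy of $\NN^{\omega_1}$ inside $\CC(X,2)$, contradicting \cite[2.7.16]{Eng}. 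Your primary route through Noble's product theorem is perfectly valid but trades a concrete embedding for a heavier external citation, and forces the compact/non-compact case split on $X_i$ that the paper's argument avoids. Your fallback suggestion of assembling a fan and invoking Proposition~\ref{p:Ascoli-sufficient} would also work in principle (non-Ascoli implies non-$k$-space), but as written it is too sketchy to stand on its own. For sufficiency the two proofs coincide, with the paper simply citing \cite[3.3.27]{Eng} for (a) and Theorem~\ref{t:Ascoli-C(X,2)-Ascoli}(3b) for (b).
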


\begin{proof}
Assume that  $\CC(X,2)$ is a $k$-space. Then,  by Theorem \ref{t:Ascoli-C(X,2)-Ascoli},  either $X$ is locally compact or $X$ is not locally compact but $X'$ is compact. In the second case $\CC(X,2)$ is a $k$-space and (b) holds by (3b) of  Theorem \ref{t:Ascoli-C(X,2)-Ascoli}.

If $X$ is locally compact, then  $X$ is a disjoint union of a family $\{ X_i\}_{i\in I}$ of separable metrizable locally compact spaces by \cite[5.1.27]{Eng}. So
\[
\CC(X,2) = \prod_{i\in I} \CC(X_i,2),
\]
where each space $\CC(X_i,2)$ is Polish  by Corollary \ref{c:Pol-k-space}.

We claim that $X_i$ is discrete for all but countably many $i\in I$. Indeed, suppose for a contradiction that $X_i$ is not discrete for an uncountable subset $J$ of $I$. 
So $X_i$ contains a clopen infinite compact subset $K_i$, $i\in J$. As $K_i$ is compact and metric, $\CC(K_i,2)$ is discrete and countable, so it is homeomorphic to $\NN$. Since $\CC(K_i,2)$ is homeomorphic to a closed subset of $\CC(X_i,2)$, we obtain that $\CC(X,2)$ contains a closed subspace $Z$ which is homeomorphic to $\NN^{|J|}$. As $Z$ is not a $k$-space by \cite[2.7.16]{Eng}, we get a contradiction.

Denote by $L$ the direct topological sum of all non-discrete spaces $X_i$ and by $D$ the direct topological sum of all discrete spaces $X_i$ (if they exist). So $L$ is a Polish locally compact space, and $X= L\cup D$ is a topological union of $L$ and $D$. Then
\[
\CC(X,2) = \CC(L,2) \times 2^D,
\]
where $\CC(L,2) $ is a Polish abelian group group by Corollary \ref{c:Pol-k-space}. 
 and $2^D$ is a compact abelian group.

Conversely, assume that (a) holds and $X=L\cup D$, where $L$ is a separable metrizable locally compact space and $D$ is a discrete space. Then  $\CC(X,2) =\CC(L,2) \times 2^D$ is a $k$-space by \cite[3.3.27]{Eng}. If (b) holds and $X$ is not locally compact but the derived set $X'$ is compact, then $\CC(X,2)$ is a $k$-space by Theorem \ref{t:Ascoli-C(X,2)-Ascoli}(3b).
\end{proof}

\begin{theorem}  \label{t:Ascoli-C(X,2)-seq}
Let $X$ be a zero-dimensional metric space. Then $\CC(X,2)$ is  sequential if and only if $X$ is a Polish space and one of the following conditions holds
\begin{enumerate}
\item[{\rm (a)}] $X$ is not locally compact but the derived set $X'$ is compact, in this case $\CC(X,2)$ has an open subgroup $H$ which is topologically isomorphic to $(2^\w)^\infty$ and $\CC(X,2)$ is homeomorphic to $(2^\w)^\infty$;
\item[{\rm (b)}] $X$ is  locally compact, in this case $\CC(X,2)$ is a  Polish space.
\end{enumerate}
\end{theorem}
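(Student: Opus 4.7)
The strategy is to chain Theorem~\ref{t:Ascoli-C(X,2)-k-space} (applicable because every sequential space is a $k$-space) with the sequentiality criterion of Proposition~\ref{p:Ascoli-M-k}(iii), extract Polishness of $X$, and finally identify $\CC(X,2)$ topologically with $(2^\w)^\infty$ in case~(a).

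For the necessity, assume $\CC(X,2)$ is sequential. Being a $k$-space, Theorem~\ref{t:Ascoli-C(X,2)-k-space} produces two cases. If $X=L\sqcup D$ with $L$ separable metric locally compact and $D$ discrete, the decomposition $\CC(X,2)\cong\CC(L,2)\times 2^D$ exhibits $2^D$ as a closed subspace and hence forces $2^D$ to be sequential. Thus $D$ must be countable: otherwise $2^{\w_1}$ arises as a retract of $2^D$ (via projection to any $\w_1$ coordinates), and $2^{\w_1}$ fails to be sequential because its dense subgroup of finitely supported elements has sequential closure strictly smaller than its actual closure. Then $X$ is Polish locally compact and $\CC(X,2)$ is Polish by Corollary~\ref{c:Pol-k-space}. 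In the other case ($X'$ compact, $X$ not locally compact), Theorem~\ref{t:Ascoli-C(X,2)-Ascoli}(3b) supplies an open subgroup $H\cong\bigoplus_n 2^{\kappa_n}$ with the box topology, where the construction recalled there forces each $\kappa_n=|U_n\setminus U_{n+1}|\geq\w$. Sequentiality of $\CC(X,2)$ descends to the open subgroup $H$, so Proposition~\ref{p:Ascoli-M-k}(iii) gives $\kappa_n\leq\w$; hence $\kappa_n=\w$ for every $n$ and $H\cong(2^\w)^\infty$. Finally, $X\setminus X'=\bigsqcup_n(U_n\setminus U_{n+1})$ is countable, so $X$ is separable, and combined with the compactness of $X'$ the remark following Proposition~\ref{p:Ascoli-C(X,2)-Pol} gives that $X$ is Polish.

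For the sufficiency, case~(b) is immediate from Corollary~\ref{c:Pol-k-space}. In case~(a), Proposition~\ref{p:Ascoli-M-k}(iii) yields the sequentiality of $H\cong(2^\w)^\infty$, and this lifts to all of $\CC(X,2)$ by the standard observation that a topological group admitting an open sequential subgroup is itself sequential: cosets of $H$ are clopen and homeomorphic to $H$, so sequential closedness of any subset can be tested one coset at a time.

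The main obstacle will be the topological identification $\CC(X,2)\cong(2^\w)^\infty$ in case~(a). The quotient $\CC(X,2)/H$ is discrete (since $H$ is open) and coincides set-theoretically with $C(X',2)$, which is countable because a compact zero-dimensional metric space has a countable base of clopen sets and every clopen subset is a finite union of basic ones by compactness. Over $\mathbb{Z}(2)$ any short exact sequence of vector spaces splits algebraically, and any linear section from a discrete space is automatically continuous, so $\CC(X,2)\cong(2^\w)^\infty\times Q$ for a countable discrete group $Q$. I would finish by presenting both $(2^\w)^\infty$ and $(2^\w)^\infty\times\NN$ as strict inductive limits of chains of Cantor sets $K_1\subset K_2\subset\cdots$ with each $K_n$ nowhere dense in $K_{n+1}$, and invoking the topological uniqueness of such limits; checking the nowhere-dense condition on both sides and arranging a compatible cofinal reindexing of the two exhaustions is where the bulk of the work concentrates.
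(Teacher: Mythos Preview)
Your argument tracks the paper's proof closely through both case splits and the extraction of Polishness; the logic is sound. The one place you diverge is the final identification $\CC(X,2)\cong(2^\w)^\infty$ in case~(a), and there you are making life much harder than necessary.

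Once you have $\CC(X,2)\cong H\times Q$ with $H\cong(2^\w)^\infty$ and $Q$ a countable discrete elementary abelian $2$-group, the paper finishes by pure group-algebra: $Q\cong\bigoplus_{i}\mathbb{Z}(2)$ (finitely or countably many summands), and then one simply absorbs the $\mathbb{Z}(2)$ factors into the box sum using the topological isomorphism $\mathbb{Z}(2)\times 2^\w\cong 2^\w$. Concretely,
\[
\Big(\bigoplus_{i\in\NN}\mathbb{Z}(2)\Big)\times\Big(\bigoplus_{n\in\NN}2^\w\Big)\;\cong\;\bigoplus_{n\in\NN}\big(\mathbb{Z}(2)\times 2^\w\big)\;\cong\;\bigoplus_{n\in\NN}2^\w,
\]
all as topological groups with the box topology (and the finite-$Q$ case is even easier, absorbing into a single factor). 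This is a two-line computation, whereas your proposed route through strict inductive limits of Cantor sets and a topological uniqueness theorem is correct in spirit but requires citing or reproving a nontrivial classification result, and you would still need to handle the finite-$Q$ case separately since you only wrote $(2^\w)^\infty\times\NN$. Replace that paragraph with the absorption argument and the proof is complete and clean.

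One minor remark: your observation that the construction in Theorem~\ref{t:Ascoli-C(X,2)-Ascoli}(3b) already forces each $\kappa_n$ to be infinite is correct and in fact slightly sharper than the paper's phrasing, which only asserts $\kappa_n=\w$ for infinitely many $n$ before invoking $\mathbb{Z}(2)^r\times 2^\w\cong 2^\w$ to handle possible finite summands.
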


\begin{proof}
Assume that $\CC(X,2)$ is a sequential space.  By (the proof of) Theorem \ref{t:Ascoli-C(X,2)-k-space}, we have to cases.

{\em Case (a)}: $X$ is not locally compact but the set $X'$ is compact. So $\CC(X,2)$ contains an open subgroup $H$ which is topologically isomorphic to the direct sum $\bigoplus_{n\in\NN} 2^{\kappa_n}$ endowed with the box topology for some sequence $\pmb{\kappa} =(\kappa_n)_{n\in\NN}$ of cardinal numbers, see (3b) of Theorem \ref{t:Ascoli-C(X,2)-Ascoli}. Since every $2^{\kappa_n}$ is also sequential, we obtain that $\kappa_n \leq\w$ for every $n\in\NN$. By the proof of the implication (3)$\Rightarrow$(2) of Theorem \ref{t:Ascoli-C(X,2)-Ascoli}, this means that $X\setminus X'$ is countable. Thus $X$ is Polish. Moreover, for infinitely many indices $n$, the cardinal $\kappa_n := |U_n\setminus U_{n+1}|$ is equal to $\w$ since, otherwise, the space $X$ would be locally compact.  Since the groups $2^\w$ and $\mathbb{Z}(2)^r \times 2^\w$ are topologically isomorphic for every natural number $r$, we obtain that the group $H$ is topologically isomorphic to $(2^\w)^\infty$.  Further, since $X$ is Polish, $\CC(X,2)$ is separable. Hence the discrete group $S:= \CC(X,2)/H$ is a countable abelian group of order $2$. Thus $S=\oplus_{i\in\NN} S_i$, where $S_i =\mathbb{Z}(2)$ for every $i\in\NN$ (see \cite[11.2]{Fuchs}). Therefore $\CC(X,2)$ is homeomorphic to $S\times (2^\w)^\infty$. Since the groups $S\times (2^\w)^\infty$, $(\mathbb{Z}(2)\times 2^\w)^\infty$ and $(2^\w)^\infty$ are topologically isomorphic, the item (a) is proven.

{\em Case (b)}: $X$ is locally compact and $\CC(X,2) = \CC(L,2) \times 2^D$, where $L$ is a Polish locally compact space and $D$ is a discrete space. Since $\CC(X,2)$ is sequential, $D$ is countable. 
So $X$ is a Polish locally compact space and $\CC(X,2)$ is a Polish group.
\end{proof}

Now we prove Theorem \ref{t:Ascoli-C(X,2)-A}.

{\em Proof of Theorem \ref{t:Ascoli-C(X,2)-A}}.
Items (i)-(iii) follow from Theorems \ref{t:Ascoli-C(X,2)-Ascoli}--\ref{t:Ascoli-C(X,2)-seq}, respectively. Let us prove (iv).

If $\CC(X,2)$  is Fr\'{e}chet--Urysohn, then $X$ is a Polish locally compact space and $\CC(X,2)$  is Polish by Theorem \ref{t:Ascoli-C(X,2)-seq} and Corollary \ref{c:C(X,2)}. If $X$ is a Polish locally compact space, then $\CC(X,2)$ is Fr\'{e}chet--Urysohn by Corollary \ref{c:Pol-k-space}. 
$\Box$

For  (metrizable) Tychonoff spaces $X$ and $Y$, it would be interesting to obtain an analogue of Theorem \ref{t:Ascoli-C(X,2)-A} for the spaces $C_p(X,2)$ and $C_p(X,Y)$.

To prove Theorem \ref{t:Ascoli-C(X,2)-N} we need the following lemma whose proof is identical with the proof of Lemma 1 in \cite{Pol-1974}. For the sake of completeness we prove this lemma.
\begin{lemma} \label{l:Ascoli-C(X,2)-Pol}
Let $X$ be a zero-dimensional metric space such that $X'$ is not Lindel\"{o}f. Then $\CC(X,2)$ and $C_p(X,2)$ contain $\NN^{\w_1}$ as a closed set.
\end{lemma}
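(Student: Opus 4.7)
The plan is to construct an explicit map $T\colon \NN^{\w_1} \to C_p(X,2)$ that is a closed embedding, which will automatically serve as a closed embedding into the finer space $\CC(X,2)$. Since $X'$ is metrizable, non-Lindel\"ofness is equivalent to non-separability, and a Zorn's lemma argument yields some $\varepsilon > 0$ and an $\varepsilon$-separated set $\{x_\alpha : \alpha < \w_1\} \subset X'$. Using zero-dimensionality of $X$, I pick clopen neighborhoods $C_\alpha \subset B(x_\alpha, \varepsilon/4)$ of $x_\alpha$; by the triangle inequality $\{C_\alpha\}$ is a discrete family, so $\bigcup_\alpha C_\alpha$ is closed in $X$. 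Since each $x_\alpha$ is non-isolated in $C_\alpha$ and $C_\alpha$ is zero-dimensional metric, I pick a strictly decreasing clopen base $C_\alpha = U_{\alpha,1} \supsetneq U_{\alpha,2} \supsetneq \cdots$ at $x_\alpha$ with $\bigcap_n U_{\alpha,n} = \{x_\alpha\}$, and let $V_{\alpha,n} := U_{\alpha,n} \setminus U_{\alpha,n+1}$ be the nonempty clopen layers.

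I will define
\[
T(\phi) := \mathbf{1}_{\bigcup_\alpha U_{\alpha,\phi(\alpha)}} \qquad (\phi \in \NN^{\w_1}).
\]
The set $\bigcup_\alpha U_{\alpha,\phi(\alpha)}$ is a discrete union of clopen sets, hence itself clopen, so each $T(\phi)$ is continuous. The map $T$ is continuous into $C_p(X,2)$ because any finite tuple of test points meets only finitely many $C_\alpha$'s, so matching $\phi$ on those coordinates already guarantees pointwise agreement. Fixing a witness point $y_{\alpha,n} \in V_{\alpha,n}$, one sees $T(\phi)(y_{\alpha,n}) = 1$ iff $\phi(\alpha) \leq n$; comparing the values at $y_{\alpha,\phi(\alpha)}$ and $y_{\alpha,\phi(\alpha)-1}$ pinpoints $\phi(\alpha)$, so $T^{-1}$ is continuous with respect to the $C_p$-topology. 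Therefore the subspace topologies on $T(\NN^{\w_1})$ inherited from both $C_p(X,2)$ and $\CC(X,2)$ coincide with the product topology on $\NN^{\w_1}$.

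The core of the argument is to show that $T(\NN^{\w_1})$ is closed in $C_p(X,2)$; closedness in the finer $\CC(X,2)$-topology follows for free. Let $g \in C(X,2)$ be a pointwise limit of a net $T(\phi_\lambda)$. Since $T(\phi_\lambda)(x_\alpha) = 1$ for every $\lambda$, we have $g(x_\alpha) = 1$, and continuity of $g$ at the non-isolated point $x_\alpha$ forces $g \equiv 1$ on some $U_{\alpha,N_\alpha}$. For $y \in V_{\alpha,n}$ the value $T(\phi_\lambda)(y) = \mathbf{1}[\phi_\lambda(\alpha) \leq n]$ is independent of $y$, so $g$ is constant on each layer, with $g|_{V_{\alpha,n}} = 1$ iff $\phi_\lambda(\alpha) \leq n$ eventually. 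This indicator is monotone in $n$ and equal to $1$ for $n \geq N_\alpha$, so $\phi(\alpha) := \min\{n : g|_{V_{\alpha,n}} = 1\}$ is a well-defined element of $\NN$ and $g|_{C_\alpha} = \mathbf{1}_{U_{\alpha,\phi(\alpha)}}$. Combined with $g \equiv 0$ outside $\bigcup_\alpha C_\alpha$, we obtain $g = T(\phi)$.

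The main obstacle is precisely this closure step. Absent the constraint that the limit $g$ be continuous, a net with $\phi_\lambda(\alpha) \to \infty$ would produce the discontinuous pointwise limit $\mathbf{1}_{\{x_\alpha\}}$ on $C_\alpha$, and no threshold $\phi(\alpha)$ would exist. It is exactly the fact that $x_\alpha$ is non-isolated, together with continuity of $g$ as an element of $C_p(X,2)$, that rules this out and pins down $\phi(\alpha)$.
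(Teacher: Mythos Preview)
Your proof is correct and follows essentially the same strategy as the paper's: pick a discrete family of clopen sets $\{C_\alpha\}_{\alpha<\w_1}$, each containing a point of $X'$, and inside each $C_\alpha$ encode a closed discrete copy of $\NN$ by exploiting that any pointwise limit must be continuous at the non-isolated point. The paper presents this as a product decomposition (each factor $A_i=\{f:f|_{X\setminus F_i}=0\}$ contains a closed copy of $\NN$ built from functions with growing finite support along a sequence $x_{n,i}\to z_i$), whereas you build a single explicit map $T$ using indicators of a shrinking clopen base at $x_\alpha$; these encodings are dual and the closure arguments are the same idea.

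One small correction: your opening assertion that a closed embedding into $C_p(X,2)$ ``automatically'' yields a closed embedding into the finer space $\CC(X,2)$ is not valid in general---closedness of the image and continuity of $T^{-1}$ transfer to the finer topology, but continuity of $T$ does not. In your situation the missing check is immediate: since $\{C_\alpha\}$ is discrete, any compact $K\subset X$ meets only finitely many $C_\alpha$, and $T(\psi)|_{C_\alpha}$ depends only on $\psi(\alpha)$, so matching $\phi$ on those finitely many coordinates forces $T(\psi)|_K=T(\phi)|_K$. You should state this explicitly rather than claim it is automatic.
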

\begin{proof}
As the pointwise topology is weaker than the compact-open one, it is enough to prove that $C_p(X,2)$ contains $\NN^{\w_1}$ as a closed set. Since $X$ is paracompact and zero-dimensional and $X'$ is not Lindel\"{o}f, there exists a family, discrete in the space $X$,  of clopen sets $\{ F_i : i<\w_1\}$ and a family $\{ z_i : i<\w_1\}$ such that $z_i \in F_i\cap X'$ for every $i<\w_1$.
Let
\[
A_i :=\big\{ f\in \CC(X,2): f(X\setminus F_i)=\{ 0\} \big\}
\]
and
\[
 A:=\left\{ f\in \CC(X,2): f(X\setminus \bigcup_{i\in I} F_i)=\{ 0\} \right\}.
\]
Then $A=\prod_{i\in I} A_i$, and since $A$ is closed in $C_p(X,2)$, it is enough to prove that every $A_i$ contains a discrete closed countable subset.
Fix arbitrarily $i<\w_1$. Take a one-to-one sequence $\{ x_{n,i}: n\in\NN\} \subset F_i$ such that $x_{n,i}\to z_i$ and $x_{n,i}\not= z_i$ for every $n\in\NN$, and chose a function $f_{n,i}\in A_i$ such that
\[
f_{n,i}(x_{m,i}) =\left\{
\begin{split}
0, & \; m>n,\\
1, & \; m\leq n,
\end{split}
\right. \quad f_{n,i} (z_i)=0. 
\]
Clearly, the open neighborhood $\big\{ f\in A_i: f(x_{n,i})=1, f(x_{n+1,i})=0\big\}$ of $f_{n,i}$ does not contain $f_{k,i}$ for $k\not= n$. So the set $B:=\{ f_{n,i}: n\in\NN\}$ is discrete in $A_i$. Let us show that $B$ is also closed in $A_i$.

Assuming the converse we choose $h\in \overline{B}\setminus B$. Then $h(z_i)=0$ and the open neighborhood $h+\big[ \{ x_{1,i},\dots,x_{n,i}\}; 1/2\big]$ of $h$ contains infinitely many $f_{m,i}\in B$. By the construction of $f_{n,i}$ we obtain that $h(x_{n,i})=1$ for every $n\in\NN$, and hence $h(z_i)=1$. This contradiction shows that $B$ is closed in $A_i$.
\end{proof}

We are ready to prove  Theorem \ref{t:Ascoli-C(X,2)-N}.

{\em Proof of Theorem \ref{t:Ascoli-C(X,2)-N}.}
The implications (i)$\Rightarrow$(iv) and (ii)$\Rightarrow$(iv) follow from Lemma \ref{l:Ascoli-C(X,2)-Pol} and from the fact that the space $\NN^{\w_1}$ is not normal by \cite[2.7.16]{Eng}. The implication (iv)$\Rightarrow$(iii) follows from Proposition 1 of \cite{Pol-1974}. The implications (iii)$\Rightarrow$(i) and (iii)$\Rightarrow$(ii) are clear.
$\Box$

Recall that a topological space $X$ has {\em countable tightness at a point $x\in X$} if whenever $x\in \overline{A}$ and $A\subseteq X$, then $x\in \overline{B}$ for some countable $B\subseteq A$; $X$ has {\em countable tightness} if it has countable tightness at each point $x\in X$. 

Recall also (see \cite{Mich})  that a topological space $X$ is called {\em cosmic}, if $X$ is a regular space with a countable network (a family $\mathcal{N}$ of subsets of $X$ is called a \emph{network} in $X$ if,
whenever $x\in U$ with $U$ open in $X$, then $x\in N\subseteq U$ for some $N\in\mathcal{N}$). It is trivial that any cosmic space has countable tightness.

Following \cite{Banakh}, a family $\mathcal{N}$ of subsets of a topological space $X$ is called a  {\em Pytkeev network at a point $x\in X$} if $\Nn$ is a network at $x$ and for every open set $U\subseteq X$ and a set $A$ accumulating at $x$ there is a set $N\in\Nn$ such that $N\subseteq U$ and $N\cap A$ is infinite. The space $X$ is called a a {\em $\Pp_0$-space} if $X$ has a countable Pytkeev network. Any $\Pp_0$-space is cosmic.

Now we are ready to prove Theorem \ref{t:Ascoli-C(X,2)-Tight}

{\it Proof of Theorem \ref{t:Ascoli-C(X,2)-Tight}.}
(i)$\Rightarrow$(iv) and (iii)$\Rightarrow$(iv): Suppose for a contradiction that $X$ is not separable. Then $X$ has a discrete family $\AAA :=\{ A_i\}_{i\in\w_1}$ of clopen subsets by \cite[4.1.15 and 5.1.12]{Eng} (recall that $X$ is zero-dimensional).  Define the monomorphism $T:2^{\w_1} \to C(X,2)$ by
\[
T\big( (z_i)_{i< \w_1}\big) := \sum_{i<\w_1} z_i \chi_{A_i},
\]
where $\chi_A$ is the characteristic function of a subset $A$ of $X$. Clearly, $T$ is continuous as a map from the compact group $2^{\w_1}$ into $C_p(X,2)$.  Also $T$ is an embedding from  $2^{\w_1}$ into $\CC(X,2)$. Indeed, since $\AAA$ is discrete, any compact subset $K$ of $X$ intersects only with a finite subfamily $\{ A_{i_1}, \dots, A_{i_m}\}$ of $\AAA$. Then, for every $\epsilon >0$, we obtain
\[
T\left( \prod_{k=1}^m \{ 0_{i_k} \} \times 2^{\w_1 \setminus \{ i_1,\dots,i_m\}} \right) \subseteq [K;\epsilon] \subset \CC(X,2).
\]
Thus $T:2^{\w_1} \to \CC(X,2)$ is an embedding. 

 So $\CC(X,2)$ and $C_p(X,2)$ contain an isomorphic copy of the compact abelian group $2^{\w_1}$. As $2^{\w_1}$ is not metrizable, $2^{\w_1}$ and hence also $\CC(X,2)$ and $C_p(X,2)$ have uncountable tightness by Corollary 4.2.2 in \cite{ArT}. This contradiction shows that $X$ is separable.

(iv)$\Rightarrow$(ii): If $X$ is separable, then $\CC(X,2)$ is a $\Pp_0$-space by \cite{Banakh} (see also \cite[Corollary 6.4]{GK-GMS1}). (ii)$\Rightarrow$(i) is clear, see \cite{GK-GMS1}.  (iv)$\Rightarrow$(iii):  By Proposition 10.4 of \cite{Mich}, the space  $C_p(X,2)$ is cosmic, so it has countable tightness.
$\Box$

\begin{remark}{\em
R.~Pol and F.~Smentek \cite{PolSmen} proved the following interesting result: If $X$ is a zero-dimensional realcompact $k$-space, then the group $\CC(X,2)$ is reflexive. Note also that, for the group of rational numbers $\mathbb{Q}\subset\IR$ and the convergent sequence $\mathfrak{s}$,  the metric spaces $\mathbb{Q}\times \w_1$ and $\mathfrak{s}\times \w_1$ are realcompact by \cite[3.11.5 and 5.5.10(b)]{Eng}. These results and Theorems \ref{t:Ascoli-C(X,2)-A} and \ref{t:Ascoli-C(X,2)-N} show the following: (i) the group $\CC(\mathbb{Q}\times \w_1,2)$ is  reflexive but it is neither normal nor Ascoli, (ii) the group $\CC(\mathbb{Q},2)$ is a reflexive $\Pp_0$-group but is not Ascoli, (iii) the group $\CC(\mathfrak{s}\times \w_1,2)$ is a reflexive $k_\IR$-space but is not normal.}
\end{remark}

\vspace{3mm}
{\bf Acknowledgments}.
The author is deeply indebted to Professor R.~Pol who sent  to T.~Banakh and me a solution of Problem 6.8 in \cite{BG} (see Proposition \ref{p:Ascoli-C(X,2)-Pol} and  Corollary \ref{c:Ascoli-C(X,2)-Pol}). In \cite{Pol-2015},   R.~Pol  noticed that it can be shown that the space $\CC(L,2)$ contains a closed countable non-Ascoli subspace using some ideas from \cite{PolSmen}. Using this fact  R.~Pol  proved that the space $\CC(L,2)$ is not Ascoli (see Proposition \ref{p:Ascoli-L-Pol}). We provide another proof of the last result but also essentially using constructions from \cite{PolSmen}.

\bibliographystyle{amsplain}

\end{document}